\numberwithin{equation}{section}
\numberwithin{figure}{section}
\renewcommand{\subsection}[1]{\vspace{3mm}\refstepcounter{subsection}\noindent{\bf \thesubsection. #1.} }
\renewcommand{\subsubsection}[1]{\vspace{3mm}\refstepcounter{subsubsection}\noindent{\bf \thesubsubsection. #1.} }
\numberwithin{equation}{section}
\newtheorem{theorem}{Theorem}[section]
\newtheorem{proposition}[theorem]{Proposition}
\newtheorem{conjecture}[theorem]{Conjecture}
\theoremstyle{definition}
\newtheorem{definition}[theorem]{Definition}
\newtheorem{remark*}[theorem]{Remark}
\newtheorem{example}[theorem]{Example}
\newtheorem*{example*}{Example}
\def\PP{\mathbb P}
\def\ord{\operatorname{ord}}
\def\min{\mathop{\mathrm{min}}}
\def\ZZ{\mathbb Z}
\def\PP{\mathbb P}
\def\K{K}
\def\cal{\mathcal }
 \def\ord{\text{ord}}
\def\p{\mathbf p}
\def\q{\mathbf q}
\def\gen{\mathfrak g}
\begin{document}
\title[Campana conjecture  for coverings of toric varieties over function fields]{Campana conjecture  for coverings of toric varieties over function fields}

\author{Carlo Gasbarri}
\address{IRMA, UMR 7501,  7 rue René-Descartes, 67084 Strasbourg, France} 
\email{gasbarri@math.unistra.fr}

\author{Ji Guo}
\address{School of Mathematics and Statistics \\ Central South University \\ Changsha  410075 \\ China} 
\email{221250@csu.edu.cn}

 \author{Julie Tzu-Yueh Wang}
\address{Institute of Mathematics\\
 Academia Sinica\\
 6F, Astronomy-Mathematics Building\\
 No. 1, Sec. 4, Roosevelt Road \\
 Taipei 10617\\
 Taiwan}
\email{jwang@math.sinica.edu.tw}

\thanks{2020\ {\it Mathematics Subject Classification}: Primary 11J97; Secondary 14H05 and 11J87}
\thanks{Key words:  Lang-Vojta conjecture, Campana's orbifold conjecture, function fields, toric varieties}
\thanks{The  second-named author was supported in part by National Natural Science Foundation of China (No. 12201643) and Natural Science Foundation of Hunan Province, China (No. 2023JJ40690).}
\thanks{The third-named author was supported in part by Taiwan's  NSTC grant  113-2115-M-001-011-MY3.}

\begin{abstract}
 We first prove  Vojta's abc conjecture over function fields for  Campana points on projective toric varieties with high multiplicity along the boundary.    As a consequence, we obtain a version of Campana's conjecture on finite  coverings of projective    toric varieties over function fields. 
  \end{abstract}

\maketitle
\baselineskip=16truept

\section{Introduction }

One of the central guiding principles in Diophantine geometry over number fields and function fields is the slogan: {\it Geometry governs arithmetics}. Roughly speaking, this tells us that one should be able to predict the distribution of rational and integral points of a quasi-projective variety using just informations on its geometry over the algebraic closure of the field. This philosophy  is fully confirmed in dimension one, where Siegel's  
 theorem on finiteness of integral points on affine curves and Faltings' theorem on the finiteness of rational points on curves of genus at least two give a satisfactory answer.  In both cases, the (logarithmic) Kodaira dimension provides a complete prediction of the arithmetic behavior.

For higher dimensional varieties, the situation is considerably more  complicated.  A central conjecture reflecting this philosophy is the celebrated Lang-Vojta conjecture, which can be formulated as follows (see \cite[Proposition 15.9]{Vojta}):
\begin{conjecture}\label{Lang-Vojta}
Let $k$ be a number field and $S$ be a finite set of places of $k$ containing all the archimedean places of $k$.
Let $  X$ be a smooth  projective variety over  $k$, and let $D$ be a   normal crossings divisor on $ X$ over $k$.  Let   ${\mathbf K}_X$   be a canonical divisor of $ X$.
If $V:=  X\setminus D$ is of log general type  (i.e.   ${\mathbf K}_X  +D$ is big), then no set of $(D,S)$-integral  points on $X(k)$ is Zariski dense.
\end{conjecture}
The underlying philosophy is that if a variety admits potentially ``a lot" of rational  or integral points, then it must be  ``special" (which, in the one dimensional case means that it should be not of general type). 
The notion of special variety is clarified in Campana's theory of varieties and requires the introduction of new geometrical objects which nowadays are called ``Campana Orbifolds". 
The theory of Campana Orbifolds is not yet completely developed,  but  key notions such as orbifold structures, orbifold morphisms, and orbifold points have been precisely defined. In particular, one can define the notion of a Campana orbifold pair of general type, which allows to define the notion of special varieties.  Roughly speaking, one conjectures that orbifolds of general type should have rational points which are never Zariski dense (with some caveat in the function fields case due to the presence of isotrivial varieties),  and a variety with potentially dense rational points should not have a morphism to an orbifold of general type. 

One of the most active areas of research in Diophantine geometry is the verification of this philosophy for nontrivial classes of varieties and Campana orbifolds. In particular, Conjecture~\ref{Lang-Vojta} can be viewed as a special case of Campana's more general orbifold conjecture, which is formulated in the function field setting as Conjecture~\ref{CampanaC}.  The function field analogue (in characteristic zero)  of  Conjecture \ref{Lang-Vojta} was established in \cite{GSW2022} for varieties of general type that arise as ramified covers of $\mathbb G_m^n$ with $n\ge 2$.  This work builds upon and extends earlier results of Corvaja and Zannier in the split case  for $n=2$(see \cite{CZ2008} and  \cite{CZ2013}), as well as the non-split surfaces cases by Turchet  \cite{Tur} and by  Capuano and Turchet  \cite{CT}.

In this article, we further verify this claim over function fields of characteristic zero by proving new cases of the Campana conjecture for orbifolds of general type arising as finite coverings of toric varieties.

Let ${\bf k}$ be an algebraically closed field of characteristic zero,
$C$ be a smooth projective curve of genus $\mathfrak{g}$ defined over ${\bf k}$,
and $K:={\bf k}(C)$ be the function field of $C$.  
 At each point $\p\in C(\mathbf{k})$, we may   define a normalized order function $v_{\p}:=\ord_{\p}:\K\to\ZZ\cup\{+\infty\}$.
Let $S$ be a finite set of points of $C$. Denote by ${\cal O}_{S}$ and
${\cal O}_{S}^*$ the sets  of $S$-integers and $S$-units in $K$ respectively.

We now introduce the notion of Campana orbifold and of Campana points:

\begin{definition}\label{campanapair}
A smooth Campana orbifold over $K$ is a pair $(X,\Delta)$ consisting of a smooth projective variety $X$ and an effective $\mathbb Q$-divisor $\Delta$ on $X$, both defined over $K$, such that: 

\begin{enumerate}
\item 
$
\Delta:=\Delta_{\epsilon}=\sum_{\alpha\in \mathcal A_{\epsilon}} \epsilon_{\alpha}D_{\alpha},
$
where the $D_{\alpha}$ are prime divisors on $X$, and $\epsilon_{\alpha}\in \{1-\frac1m: m\in \mathbb Z_{\ge 2}\}\cup\{1\}$ for all $\alpha\in\mathcal A$.
\item The support $\Delta_{\rm red}=\sum_{\alpha\in\mathcal A_{\epsilon}}D_{\alpha}$ is a divisor with  normal crossings on $X$.
\end{enumerate}
\end{definition}

The {\it canonical divisor} of the Campana orbifold $(X,\Delta)$ is the $\mathbb Q$-divisor $ {\mathbf K}_X+\Delta$ on $X$, where ${\mathbf K}_X$ is the canonical divisor of $X$. The Campana orbifold $(X,\Delta)$ is said to be {\it of general type} if $ {\mathbf K}_X+\Delta$  is a big divisor on $X$.

We can now give the definition of {\it Campana point} of a Campana orbifold.

\begin{definition}\label{campdefi}
With the notation introduced above, we say that $\mathcal R\subseteq X(K)$ is a set of  {\it Campana $(\Delta,S)$-integral points} if  (i) no point $P\in \mathcal R$ lies in ${\rm Supp}(\Delta)$, and (ii)  for each $D_{\alpha}$ there is a Weil function $\{\lambda_{D_{\alpha},\p}\}_{\p\in C({\bf k})}$, possibly  depending on $\mathcal R$,  such that the following holds:
\begin{enumerate}
\item[(a)] For all $\alpha$ with $\epsilon_{\alpha}=1$ and $\p\notin S$, one has
$\lambda_{D_{\alpha},\p}(P)=0$.

\item[(b)] For $\p\notin S$, and all $\alpha\in \mathcal A_{\epsilon}$ with both $\epsilon_\alpha<1$ and $\lambda_{D_{\alpha},\p}(P)>0$, we have 
$$
\lambda_{D_{\alpha},\p}(P)\ge \frac 1{1-\epsilon_{\alpha}}. 
$$
\end{enumerate}
\end{definition}
In other words, writing $\epsilon_{\alpha}=1-\frac{1}{m_{\alpha}}$, we require  that $\lambda_{D_{\alpha},\p}(P)\ge m_{\alpha}$ whenever $\lambda_{D_{\alpha},\p}(P)>0$.

\begin{remark*}
If $\epsilon_{\alpha} = 1$ for each divisor $D_{\alpha}$, then the set of Campana $(\Delta, S)$-integral points on $X(K)$ coincides with the set of classical $(D, S)$-integral points on $X(K)$.
\end{remark*}

 Campana's orbifold conjecture further generalizes the Lang-Vojta conjecture (Conjecture \ref{Lang-Vojta}) by incorporating multiplicities along the support of the divisors, rather than ignoring them entirely. In the function field setting, it can be formulated as follows. 
We denote by $h_{ A}$  the Weil height function associated with the divisor $A$ (see Section \ref{weilfunctions}).

\begin{conjecture}\label{CampanaC}
 Let \((X,\Delta)\) be a smooth Campana orbifold of general type.  Let $A$ be a big divisor on $X$. 
Then, for any set $\mathcal R$  of Campana $(\Delta,S)$-integral points,  there exists a proper closed subvariety $Z\subset  X $  such that we have  $h_A(P)\le O(1)$ for all $P\in \mathcal R\setminus Z$.
\end{conjecture}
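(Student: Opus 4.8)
Conjecture~\ref{CampanaC} is open in general; what follows is the natural line of attack, and the point at which it becomes unconditional is exactly the special geometry treated in the rest of the paper. The plan is to reduce the assertion to the classical Lang conjecture over function fields, by transferring the problem to a ramified cover of $X$ on which Campana $(\Delta,S)$-integral points become ordinary rational points.

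\emph{Step 1: passage to an orbifold cover.} Write $\epsilon_\alpha = 1 - \frac{1}{m_\alpha}$, and assume for simplicity that all $\epsilon_\alpha < 1$ (the components with $\epsilon_\alpha = 1$ are kept as reduced boundary below and handled identically by the log version of what follows). I would construct a finite surjective Kawamata-type cover $\pi\colon Y \to X$, \'etale over $X \setminus {\rm Supp}(\Delta)$, with $\pi^* D_\alpha = m_\alpha D_\alpha'$ for reduced divisors $D_\alpha'$; after a finite base change of $K$ and a resolution of singularities I may take $Y$ smooth and $\sum_\alpha D_\alpha'$ a normal crossings divisor. The defining inequality of a Campana point --- $\lambda_{D_\alpha,\p}(P) \ge m_\alpha$ whenever $\lambda_{D_\alpha,\p}(P) > 0$, for every $\p \notin S$ --- is exactly what makes each $P \in \mathcal R$ lift, over a finite extension $K'/K$ of degree at most $\deg\pi$, to an honest $K'$-rational point $Q$ of $Y$, no integrality condition surviving because $\pi^* D_\alpha$ is divisible by $m_\alpha$; thus $\pi$ maps a set of $K'$-rational points of $Y$ onto $\mathcal R$. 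A ramification computation gives $\mathbf K_Y = \pi^*(\mathbf K_X + \Delta) + E$ with $E$ effective (and $E = 0$ if no resolution is needed), so $\mathbf K_Y$ is big: $Y$ is of general type.

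\emph{Step 2: Lang's conjecture upstairs, and descent.} To the general type variety $Y$ I would apply the function-field form of Lang's conjecture: for any big divisor $L$ on $Y$ there is a proper closed subset $W \subsetneq Y$, defined over a finite extension of $K$, such that $h_L(Q) = O(1)$ for every $Q \in Y(K') \setminus W$ over every finite extension $K'$ --- this is where the isotriviality caveat of the Introduction enters, the positive-dimensional special (in particular isotrivial) subvarieties of $Y$ being swept into $W$. Take $L = \pi^* A$, which is big because $A$ is, and set $Z := \pi(W)$, a proper closed subset of $X$ that we may enlarge to the union of its Galois conjugates so as to be defined over $K$. If $P \in \mathcal R$ with $P \notin Z$, then any lift $Q$ avoids $W$, so by functoriality of heights $h_A(P) = h_{\pi^* A}(Q) + O(1) = O(1)$. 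This is the conclusion of the conjecture.

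\emph{The main obstacle.} Step 2 invokes Lang's conjecture (the Bombieri--Lang / Lang--Vojta circle of conjectures), which over function fields is a theorem only in dimension one --- the Mordell conjecture via Manin--Grauert, underpinned by the $abc$ theorem of Mason--Stothers and its multivariable refinement due to Brownawell--Masser --- and in scattered higher-dimensional cases. The practical alternative, and the one carried out in the rest of this paper, is to bypass it: under sufficient geometric rigidity one can prove directly an explicit truncated Second Main Theorem, i.e., a Vojta-type $abc$ inequality, for the Campana points in question. Projective toric surfaces with large multiplicity along the toric boundary are exactly such a case --- the boundary is a cycle of rational curves, Campana $(\Delta,S)$-integrality there translates into a system of $S$-unit equations over $K$, and the function-field $abc$ and vanishing-sum estimates of Mason--Stothers and Brownawell--Masser, read through the toric dictionary, yield Vojta's $abc$ inequality for such points --- and the cover construction of Step~1 then upgrades this to the unconditional verification of Conjecture~\ref{CampanaC} for finite coverings of toric surfaces established below.
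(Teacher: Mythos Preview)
The statement is a conjecture; the paper does not prove it in general, only the special cases of Theorems~\ref{toric}--\ref{vojtaconj}. You acknowledge this and locate the sole obstruction in Step~2 (Lang), presenting Step~1 as unproblematic. It is not. Lifting a $K$-point of $X$ through a Kawamata cover $\pi:Y\to X$ with $\pi^*D_\alpha=m_\alpha D_\alpha'$ to a $K$-point of $Y$ requires $m_\alpha\mid\lambda_{D_\alpha,\p}(P)$ at every $\p$; the Campana condition of Definition~\ref{campdefi} gives only $\lambda_{D_\alpha,\p}(P)\ge m_\alpha$. With the inequality alone each $P$ still lifts to some $Q\in Y(K_P)$ with $[K_P:K]\le\deg\pi$---but that is true for \emph{every} $P\in X(K)$, Campana or not, so the orbifold hypothesis has done no work. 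The field $K_P=\mathbf k(C_P)$ varies with $P$: the curve $C_P$ (the normalization of the fibre product) is ramified over every $\p$ at which the intersection multiplicity is not divisible by $m_\alpha$, and by Riemann--Hurwitz $\chi^+_{S_P}(C_P)$ grows with $N^{(1)}_{D_0,S}(P)$. Any function-field Lang bound over $K_P$ carries a $\chi^+_{S_P}(C_P)$ term, so the descent yields only an inequality of the shape $h_A(P)\le c_1+c_2\,N^{(1)}_{D_0,S}(P)$, which is not $O(1)$. One might hope to close this loop when the $m_\alpha$ are large, since then $N^{(1)}_{D_0,S}(P)\le m_\alpha^{-1}h_{D_0}(P)$; but making that absorption quantitative requires an effective Lang with geometric exceptional locus and explicit $\chi$-dependence, so nothing is saved over invoking Lang directly. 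The distinction between Campana's $\ge m_\alpha$ condition and the divisibility condition is exactly what blocks this reduction.

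Your final paragraph also misdescribes how the paper treats its special cases. No Kawamata cover is built and no points are lifted upward. Theorem~\ref{toricGG} is proved on the toric surface itself by playing the truncated inequality~(ii) of Theorem~\ref{toric} against the elementary bound $N^{(1)}_{A,S}(P)\le\tfrac12 N_{A,S}(P)$ forced by multiplicity $\ge2$; and the engine behind Theorem~\ref{toric} is not a system of $S$-unit equations but the GCD estimate of Theorem~\ref{movinggcdaffine} for pairs of $S$-\emph{integers} satisfying a near-unit hypothesis, fed through Theorem~\ref{main_thm0}. In Theorem~\ref{vojtaconj} the cover $\pi:Y\to X$ is part of the data, and the proof runs opposite to your Step~1: Campana points on $Y$ are pushed \emph{down} to $(\pi(\Delta),S)$-integral points on the toric $X$, Theorem~\ref{toric} is applied there, and the height bound is pulled back by functoriality.
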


In order to describe the results of this paper we need another definition:

\begin{definition}\label{campdefimult} Let $\ell $ be a positive integer. A Campana orbifold $(X,\Delta)$ is said to be {\it of multiplicity at least $\ell$}  if $\Delta=\sum_i\epsilon_i\Delta_i$ and   ${{1}\over{1-\epsilon_i}}\geq \ell $ for every $i$.\end{definition}

Following Levin in \cite{Levin:GCD}, we say that an  ``admissible pair"  is a couple $(X,V)$ where  $V$ is a nonsingular variety embedded in a nonsingular projective variety $X$, both defined over $K$,  in such a way that $D_0 =X\setminus V$ is a normal crossings divisor (see \cite[Theorem 2]{GSW2022}).  A Campana orbifold $(X,\Delta)$ is said to be {\it associated to the admissible pair $(X,V)$} if  $D_0 $ is the support of $\Delta$.    
We say that a subset
$Y \subset X$ is in general position with the boundary $D_0 =X\setminus V$  if $Y$ does not contain
any point of intersection of $n$ distinct irreducible components of $D_0 $ (where
$n = \dim X$).

  \begin{definition} We will say that a variety $X$ of dimension $n$ is {\it an admissible toric variety}, if it is a smooth projective toric variety and the couple $(X,\mathbb G_m^n)$ is an admissible pair.
\end{definition}
 
\begin{remark*} In \cite{Fulton} Section 2.5, it is proven that if $X$ is a smooth toric surface, then $(X,\mathbb G_m^2)$ is an admissible pair.
\end{remark*} 

To state the first main result of this paper, we introduce the following notation.
For an effective divisor $D$, we denote by $N_{D,S} $  the counting function and by $N_{D,S}^{(1)}$  the  truncated counting  function with respect to $D$ and $S$ (see Section \ref{weilfunctions} for details).  
 \begin{theorem}\label{toric}
Let  $X$ be an admissible toric  variety of dimension $n\ge 2$. 
Let $D$ be an effective reduced  divisor on $X$ whose support is in general position with the boundary $X\setminus \mathbb G_m^n$.   Let  $A$ be a big divisor on $X$.
Then, for every  $\epsilon >0$, there exists a positive integer $\ell $ such that the following holds:  For every Campana orbifold $(X,\Delta)$   associated to the pair  $(X,\mathbb G_m^n)$ with multiplicity at least $\ell$,  there exists  a proper Zariski closed subset $Z$ of $X$  such that for every set $\mathcal R$ of $(\Delta, S)$-integral points,    either $ h_A(P)\le O(1)$ or  
\begin{enumerate}
\item[{\rm (a)}]
$N_{D,S} (P)-N_{D,S}^{(1)}(P)<\epsilon h_A(P) $, and 
\item[{\rm (b)}] $N_{D,S}^{(1)}(P)\ge h_D(P)-\epsilon h_A(P)-O(1),$ 
\end{enumerate}
  for any $P\in \mathcal R\setminus Z$.
\end{theorem}

As a consequence we obtain the Campana conjecture for some orbifolds whose orbifold divisor contains properly the boundary of a toric variety.

 \begin{theorem}\label{toricGG}
Let  $X$ be an admissible toric  variety of dimension $n\ge 2$.   Let  $D_0:=X\setminus \mathbb G_m^n$  and $A$ be an effective  reduced divisor on $X$ whose support is in general position with $D_0$.   
Then, there exist a positive integer $\ell_0$  and a proper closed set $Z\subset X$ for which the following holds: 
 
 Given a Campana orbifold $(X,\Delta)$   of general type with $Supp(\Delta)=D_0+A$ with multiplicity at least $\ell_0$ along $D_0 $, then
for any set $ \mathcal R\subset X(K)$ of $(\Delta, S)$ integral-points we have  $h_A(P)\le  O(1) $ for any $P\in  \mathcal R\setminus Z$.
\end{theorem}

 We point out that, in the proof we will show that, under the hypothesis of the theorem, the divisor $A$ is big.

 Theorem \ref{toricGG} may be used to give solutions to some explicit diophantine  problems over function fields: 
 \begin{example}  Let $F(x,y,z)\in {\bf k}[x,y,z]$  be a homogeneous polynomial of degree $d\geq 1$. We suppose that the plane curve $[F(x,y,z)=0]$ is smooth and that  $F$ does not vanish at $(1,0,0)$, $(0,1,0)$  and $(0,0,1)$.  Then, we can find  a homogeneous polynomial  $G(x,y,z)$ and a constant $m$ for which the following holds:   Let $\mathcal R_0$ be the set of all non--trivial coprime triples of polynomials $(f_0(t), f_1(t),f_2(t))\in k[t]$.    Let   
 $$\mathcal U_m=\{ (f_0^{n_0}, f_1^{n_1},f_2^{n_2})\,|\, (f_0,f_1,f_2)\in \mathcal R_0, \text{ and each $n_i\ge m$ is an integer }\}.$$
Then, for any tuple in $\mathcal U_m$ such that 
{\rm $F(f_0^{n_0},f_1^{n_1},f_2^{n_2})$ is a {\it perfect power}, }
we have 
$G(f_0^{n_0}(t), f_1^{n_1}(t),f_2^{n_2}(t))=0$.

For the proof, it suffices to apply Theorem \ref{toricGG} to the case where $S=\{0,\infty\}$, $X=\mathbb P^2$,  $D_0$ is the sum of the three coordinate lines,   and  
$\Delta:=(1-\frac1{m})([x_0=0]+ [x_1=0]+ [x_2=0])+\frac1{2}[F=0]$.
\end{example} 
  As a consequence of Theorem \ref{toricGG}, we obtain the following theorem which treats the case of an orbifold whose underlying variety is a finite ramified covering of  an admissible toric  variety:

 \begin{theorem}\label{vojtaconj} 
 Let  $X$ be an admissible toric  variety of dimension $n\ge 2$ and  let $D_0=X\setminus \mathbb G_m^n$.  
Let  $Y$ be a nonsingular projective  variety over $K$ with a finite  morphism $ \pi:Y\to X$.
Let $H:=\pi^{\ast}(D_0)$ and $ R\subset X$ be the ramification divisor of $\pi$,  omitting components from the support of $H$.  Suppose that $A:=\pi(R)$ and $A+D_0$ is a  simple normal crossings  divisor on $X$. Then there exists a positive integer $\ell $ and a proper closed set $Z\subset Y$ such that, if $(Y,\Delta)$ is a Campana orbifold of general type with ${\rm Supp}(\Delta)= {\rm Supp}(H)$ and multiplicity at least $\ell $, then for any set
 $ \mathcal R\subset Y(K)$ of $(\Delta, S)$-integral points, we have either $h_R(P)\le  O(1) $ or $P\in \mathcal R\setminus Z$.
\end{theorem}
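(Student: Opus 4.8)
The plan is to deduce Theorem~\ref{vojtaconj} from Theorem~\ref{toricGG} by pulling everything back along the finite map $\pi$. First I would analyze the Riemann--Hurwitz formula for $\pi : Y\to X$: writing $\mathbf K_Y = \pi^*\mathbf K_X + R_\pi$ where $R_\pi$ is the full ramification divisor, I split $R_\pi$ into the part lying over $D_0$ (whose components are in $\mathrm{Supp}(H)$) and the remaining part, which by hypothesis maps onto $A:=\pi(R)$. The idea is that the orbifold $(Y,\Delta)$ with $\mathrm{Supp}(\Delta)=\mathrm{Supp}(H)$ and very high multiplicity along $H$ should ``look like'' the pullback of a toric orbifold $(X,\Delta')$ with $\mathrm{Supp}(\Delta')=D_0+A$: the divisor $D_0$ gets the high multiplicity coming from $H$ (adjusted by ramification indices), while the branch divisor $A$ acquires multiplicities forced by the ramification of $\pi$ over $A$ — this is exactly the mechanism by which covers produce orbifold structure, and it is why the conclusion involves $h_R$ on $Y$ rather than $h_A$: one has $h_R(P) \asymp h_{\pi^*A}(\pi(P))$ up to the contribution already absorbed into the orbifold condition.

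Next I would check that the hypotheses of Theorem~\ref{toricGG} are met for the pushed-forward data on $X$. We are given that $A+D_0$ is SNC on $X$, so $\mathrm{Supp}(\Delta') = D_0 + A$ is a normal crossings divisor as required, and $A$ is effective; the general-type hypothesis on $(Y,\Delta)$ should translate, via Riemann--Hurwitz and the projection formula for bigness under finite maps, into general type for the corresponding orbifold $(X,\Delta')$ — here I would invoke the remark after Theorem~\ref{toricGG} that $A$ is automatically big, so this is consistent. Choosing the multiplicity $\ell$ in Theorem~\ref{vojtaconj} large enough (depending on $\ell_0$ from Theorem~\ref{toricGG} and on the ramification indices of $\pi$, which are bounded) guarantees that the induced multiplicity of $\Delta'$ along $D_0$ exceeds $\ell_0$.

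Then the key transfer step: I must show that if $\mathcal R\subset Y(K)$ is a set of Campana $(\Delta,S)$-integral points, then $\pi(\mathcal R)\subset X(K')$ — possibly after a finite extension of the base function field, or working directly with $K$ if $\pi$ is defined so that points lift — is (up to a controlled discrepancy) a set of Campana $(\Delta',S)$-integral points. This uses the functoriality of Weil functions, $\lambda_{\pi^*D,\p}(P) = \lambda_{D,\p}(\pi(P))$ summed appropriately over places, together with the fact that the orbifold integrality condition $\lambda_{D_\alpha,\p}(P)\ge m_\alpha$ on $Y$ along a component of $H$ forces, after dividing by the ramification index $e$, the condition $\lambda_{D_0,\p}(\pi(P))\ge m_\alpha/e$ on $X$, while the ramification of $\pi$ over $A$ forces $\lambda_{A,\p}(\pi(P))$ to be divisible by, hence at least, the corresponding ramification index — this last point is the standard ``finite covers create orbifold structure'' lemma. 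Applying Theorem~\ref{toricGG} to $\pi(\mathcal R)$ then yields a proper closed $Z'\subset X$ with $h_A(\pi(P))\le O(1)$ for $P\notin \pi^{-1}(Z')$; taking $Z := \pi^{-1}(Z')$ together with any exceptional locus where heights compare badly, and using $h_R(P)\le c\, h_{\pi^*A}(\pi(P))+O(1) = c\, h_A(\pi(P))\cdot(\deg\pi)+O(1)$ up to bounded terms, gives $h_R(P)\le O(1)$ off $Z$.

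The main obstacle I expect is the bookkeeping in this last transfer: precisely matching the orbifold multiplicities on $Y$ (along $\mathrm{Supp}(H)$) with the multiplicities they induce on $X$ along both $D_0$ and $A$, and in particular verifying that the ramification over $A$ genuinely upgrades $\pi(\mathcal R)$ to Campana integral points for an orbifold whose multiplicity along $A$ is whatever the ramification dictates (not something we get to choose) — one needs the set $\mathcal R$ to avoid $\mathrm{Supp}(A)$'s preimage appropriately, or to absorb the bad locus into $Z$. A secondary technical point is the possible need to pass to a finite cover of $C$ to make $\pi$ and the points behave well, and ensuring the height comparison $h_R \asymp h_{\pi^*A}$ is exact enough that an $O(1)$ bound on one side gives an $O(1)$ bound on the other; since $R$ and $\pi^*A$ have the same support this is a standard consequence of the comparison of Weil heights for divisors with the same support, but it must be stated carefully. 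I would also need to confirm that the exceptional set $Z'$ from Theorem~\ref{toricGG}, pulled back, remains a proper closed subset of $Y$, which is automatic since $\pi$ is finite and surjective.
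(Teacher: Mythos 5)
Your reduction hinges on the claim that pushing a Campana $(\Delta,S)$-integral point $P\in Y(K)$ down to $X$ produces a Campana point for an orbifold on $X$ with positive coefficient (multiplicity at least $2$, coming from ramification) along $A$, so that Theorem~\ref{toricGG} applies to $\pi(\mathcal R)$. This transfer step is the genuine gap: the ``finite covers create orbifold structure'' mechanism only gives $\lambda_{A,\p}(\pi(P))\ge 2$ at places where $P$ reduces to the \emph{ramified} part of $\pi^{-1}(A)$. But $\pi^{-1}(A)$ will in general contain components over which $\pi$ is unramified (e.g.\ a degree $3$ cover whose fiber over the branch curve is a double point plus a simple point, so $\pi^{*}A=2R+R'$), and the orbifold condition on $Y$ constrains $P$ only along ${\rm Supp}(H)=\pi^{-1}(D_0)$, saying nothing about how $P$ meets $R'$. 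At a place where $\lambda_{R',\p}(P)=1$ one gets $\lambda_{A,\p}(\pi(P))=1$, so $\pi(\mathcal R)$ need not satisfy any multiplicity condition along $A$; consequently the inequality $N^{(1)}_{A,S}\le\frac12 N_{A,S}$ that drives the proof of Theorem~\ref{toricGG} is unavailable, and that theorem cannot be invoked. (A secondary soft spot is that you only assert, rather than verify, that $(X,\Delta')$ is of general type; and the detour through a base extension of $K$ is unnecessary since $\pi$ is a $K$-morphism.)

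The paper's proof avoids exactly this trap: it never endows $A$ with an orbifold coefficient. Instead it uses Riemann--Hurwitz to see that $R$ is big, and the geometric inequality $\pi^{*}A\ge 2R$ to get $2\lambda_{R,\q}(P)\le\lambda_{A,\q}(\pi(P))$, hence
$N_{R,S}(P)\le N_{A,S}(\pi(P))-N^{(1)}_{A,S}(\pi(P))$,
i.e.\ the counting function of $R$ on $Y$ is absorbed into the \emph{multiplicity excess} of $A$ at $\pi(P)$. It then applies the finer Theorem~\ref{toric} (both conclusions) to the divisor $A$ on the toric surface, where the pushed-down points are only required to be integral with high multiplicity along the toric boundary $D_0$ --- which is precisely what descends from $\Delta$ --- obtaining $N_{R,S}(P)\le\epsilon h_R(P)$ and $N^{(1)}_{A,S}(\pi(P))\ge h_A(\pi(P))-\epsilon h_R(P)-O(1)$; combining the latter with $m_{R,S}(P)\le m_{A,S}(\pi(P))+O(1)$ gives $h_R(P)=m_{R,S}(P)+N_{R,S}(P)\le 2\epsilon h_R(P)+O(1)$ and the conclusion with $\epsilon=\frac13$. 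If you want to salvage your outline, you must replace the appeal to Theorem~\ref{toricGG} by this use of Theorem~\ref{toric} together with the $2R\le\pi^{*}A$ comparison.
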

 
We will again show that the divisor $R$ is   big  on $Y$.

 The proof of Theorem \ref{toric} relies on the following result, which is a consequence of the main technical theorem in  \cite{GNSW} (see Theorem \ref{main_thmGNSW}).
 To state the result, let $f\in K^*$.  We denote by $N_{0,S}(f)$ and $N_{\infty,S}(f)$  the number of zero and poles, respectively, counted with multiplicity outside of $S$. Similarly,  $N^{(1)}_{0,S}(f)$ and $N^{(1)}_{\infty,S}(f)$  denote the number of distinct zeros and poles of $f$ outside of $S$, i.e., counted without multiplicity.
We also write 
$h(f):=N_{0}(f):=N_{0,\emptyset}(f) $ and $N^{(1)}_{0}(f)=N^{(1)}_{0,\emptyset}(f) $, that is, we take $S=\emptyset$. For a polynomial $F\in K[x_1,\dots,x_n]$, we write $\tilde{h}(F)$ for the relevant height of $F$ (see Section~\ref{Notation and heights} for a precise definition).
\begin{theorem}\label{main_thm}
Let $S$ be a finite  subset of $C$.
Let  $G\in K[ x_1,\hdots, x_n]$ be a non-constant    polynomial  with neither monomial  factors nor repeated factors, and assume that $G(0,\hdots,0)\ne 0$.  Then, for any $\epsilon>0$,   there exist   positive integers $c_0$ and $\ell$, as well as  
a proper Zariski closed subset $Z$ of $\mathbb{A}^n(K)$, such that the following holds:
For every $n$-tuple $(g_1,\hdots,g_n)\in  (K^*)^n\setminus Z$ satisfying 
\begin{align}\label{truncate1}
 N_{0,S}^{(1)}(g_i)+  N_{\infty,S}^{(1)}(g_i) \le \frac1{\ell} h(g_{i}),
\end{align}
and for  each $1\le i\le n$, 
we have either
\begin{align}\label{htubd} 
 \max_{1\le i\le n}\{ h(g_i)\}\le  c_0  \left(\tilde h(G) + \max\{1,2\gen-2+|S|\}\right), 
\end{align}
or  the following two statements hold:
 \begin{enumerate}
 \item[{\rm(a)}]  $N_{0} (G(g_1,\hdots,g_n)  )-N_{0}^{(1)} (G(g_1,\hdots,g_n) )\le \epsilon \max_{1\le i\le n}\{ h(g_i)\}$.
 \item[{\rm(b)}]  If $\deg_{x_i}G=\deg G=d$ for all $1\le i\le n$, then 
   \begin{align*} 
 N_{0,S}^{(1)}\left( G(g_1,\hdots,g_n)\right)\ge  \deg G \cdot (1-\epsilon)\cdot h(1,g_1,\hdots,g_n).
 \end{align*}
  \end{enumerate}
  Moreover, $c_0$ and $\ell$ can be  effectively bounded from above in terms of $\epsilon$, $n$, and the degree of $G$.    The exceptional set $Z$ can be described  as the zero locus of a finite set
$\Sigma\subset K[x_1,\ldots,x_n]$ with the following properties: 
\begin{itemize} 
\item[{\rm(Z1)}] 
  $\Sigma$ depends on $\epsilon$ and $G$ and can be determined explicitly. 
\item[{\rm(Z2)}] The cardinality $\vert \Sigma\vert$ and the degree of each polynomial in $\Sigma$ can be effectively bounded from above in terms of $\epsilon$, $n$, and the degree of $G$.
 \item[{\rm(Z3)}] If  $G\in \mathbf{k}[x_1,\ldots,x_n]$, then  we may take $\Sigma\subset \mathbf{k}[x_1,\ldots,x_n]$.
 \end{itemize}
 \end{theorem}
 
The main technical theorem in  \cite{GNSW} is formulated for $n$-tuples of $S$-units; that is,  the condition \eqref{truncate1} is replaced by $N_{0,S}(g_i)=N_{\infty,S}^{(1)}(g_i)=0$ for all $i$.  
Importantly, the exceptional set $Z$ and the constant $c_0$ and $\ell$ obtained in  \cite{GNSW} are independent of both the choice of the set  $S$ and  the $n$-tuples of $S$-units.  As a consequence, for each  $n$-tuple $(g_1,\hdots,g_n)\in  (K^*)^n$, we are free to enlarge $S$ so that   each $g_i$ becomes  an $S$-unit, where the enlarged set $S$ depends on the given $n$-tuple. 
We then need to control the number of zeros of $G(g_1,\hdots,g_n)$ over this enlarged set $S$, which remains relatively ``small". This viewpoint will be further elaborated in Section \ref{mainmethod}.

 In Section \ref{sec:Preliminaries}, we  recall some basic definitions of heights, local Weil functions associated to divisors and the notion of Campana integral points as well as a generalization of Brownawell-Masser's $S$-unit theorem.  The proof of Theorem \ref{toric} is given in Section \ref{Theorem4}, while the proofs of Theorems \ref{toricGG} and \ref{vojtaconj} are presented in Section \ref{Thm89}.

\section{Preliminaries}\label{sec:Preliminaries}

\subsection{Notation and heights}\label{Notation and heights}
Let ${\bf k}$ be an algebraically closed field of characteristic zero,
$C$ be a smooth projective curve  of genus $\mathfrak{g}$ defined over ${\bf k}$,
and $K:={\bf k}(C)$ be the function field of $C$.
 At each point $\p\in C(\mathbf{k})$,
we may choose a uniformizer $t_{\p}$ to define a normalized order
function $v_{\p}:=\ord_{\p}:\K\to\ZZ\cup\{+\infty\}$.  
Let $S\subset C(\mathbf{k})$ be a finite subset. We denote the ring
of $S$-integers in $K$ and the group of $S$-units in $K$ respectively
by 
\[
{\cal O}_{S}:=\{f\in\K\,|\,v_{\p}(f)\ge0\text{ for all }\p\notin S\}, 
\]
and 
\[
{\cal O}_{S}^{*}:=\{f\in\K\,|\,v_{\p}(f)=0\text{ for all }\p\notin S\}.
\]
We also denote by 
$$
\chi_S(C):= 2\mathfrak{g}-2+|S|, \quad \text{and} \quad \chi_S^+(C):=\max\{0, \chi_S(C)\}.
$$
For simplicity of notation, for $f\in\K^{*}$ and $\mathbf{p}\in C(\mathbf{k})$
we let 
\[
v_{\p}^{0}(f):=\max\{0,v_{\p}(f)\},\quad \text{and}\quad v_{\p}^{\infty}(f):=-\min\{0,v_{\p}(f)\}
\]
i.e. its order of zero and pole at $\p$ respectively.
 The height
of $f$ is defined by 
\[
h(f):=\sum_{\p\in C(\mathbf{k})}v_{\p}^{\infty}(f)=\sum_{\p\in C(\mathbf{k})}v_{\p}^{0}(f).
\]
For a finite subset $S$ of $C(\mathbf{k}) $, $f\in K^{*}$ and  a positive integer $m$, we let 
\[
N_{0,S}({f})=\sum_{\mathbf{p}\in C({\bf k})\setminus S} v_{\p}^{0}(f) \quad \text{and}\quad {N}^{(m)}_{0,S}({f})={\displaystyle \sum_{\mathbf{p}\in C({\bf k})\setminus S}\min\{m,{v}_{\mathbf{p}}^{0}(f)}\}
\]
be the number of the zeros of $f$ outside of $S$, counting multiplicities and counting multiplicities up to $m$ respectively.  We then let
\[
N_{\infty,S}({f})=\sum_{\mathbf{p}\in C({\bf k})\setminus S} v_{\p}^{\infty}(f) \quad \text{and}\quad {N}^{(m)}_{\infty,S}({f})={\displaystyle \sum_{\mathbf{p}\in C({\bf k})\setminus S}\min\{m,{v}_{\mathbf{p}}^{\infty}(f)}\}
\]
be the number of the poles of $f$ outside of $S$, counting multiplicities and counting multiplicities up to $m$ respectively. 
Finally, we let 
\[
N_{0}({f})=\sum_{\mathbf{p}\in C({\bf k})} v_{\p}^{0}(f) \quad \text{and}\quad {N}^{(1)}_{0}({f})={\displaystyle \sum_{\mathbf{p}\in C({\bf k})}\min\{1,{v}_{\mathbf{p}}^{0}(f)}\}
\]
be the number of the zeros of $f$ in $C({\bf k})$ counting multiplicities and counting multiplicities up to $1$ respectively.

Let $\mathbf{x}:=(x_{1},\ldots,x_{n})$ be a tuple of $n$ variables,
and let $F=\sum_{{\bf i}\in I_{F}}a_{{\bf i}}{\bf x}^{{\bf i}}\in K[{\bf x}]$
be a nonzero polynomial, where $I_{F}$ is the set
of those indices ${\bf i}=(i_{1},\hdots,i_{n})$ with $a_{{\bf i}}\ne0$ and
we set ${\bf x}^{{\bf i}}:=x_{1}^{i_{1}}\cdots x_{n}^{i_{n}}$.
We define the height $h(F)$ and the relevant height $\tilde{h}(F)$
as follows. We put 
\begin{align*} 
v_{\p}(F):=\min_{{\bf i}\in I_{F}}\{v_{\p}(a_{{\bf i}})\}\qquad\text{for }  \p\in C({\bf k}),
\end{align*}
and define 
\begin{align*}
h(F):=\sum_{\p\in C({\bf k})}-v_{\p}(F), \qquad\text{and }\quad \tilde{h}(F):=\sum_{  \p\in C({\bf k})}-\min\{0,v_{\p}(F)\}.
\end{align*}

\subsection{Local Weil Functions and Campana integral points}\label{weilfunctions}

We  recall some facts from \cite[Chapter 10]{LangDG} and \cite[Section B.8]{HS} about local Weil functions associated to divisors.   
As before, let $K$ be the function field of a smooth projective curve $C$ over   an algebraically closed field ${\bf k}$  of characteristic zero.  Denote by  $M_K:=\{v=v_{\p}:\p\in C(\mathbf{k})\}$ the set of  valuations on $K$.
We recall that a \emph{$M_K$-constant} is a family $\{\gamma_v\}_{v\in M_K}$, where each $\gamma_v$ is a real number with all but finitely many being zero.  
Given two families $\{\lambda_{1v}\}$ and $\{\lambda_{2v}\}$ of functions parametrized by $M_K$, we say $\lambda_{1v} \le \lambda_{2v}$ holds up to a  $M_K$-constant if there exists a  $M_K$-constant $\{\gamma_v\}$ such that the function $\lambda_{2v} - \lambda_{1v}$ has values at least $\gamma_v$ everywhere.  We say $\lambda_{1v} = \lambda_{2v}$ up to a  $M_K$-constant if $\lambda_{1v} \le \lambda_{2v}$ and $\lambda_{2v} \le \lambda_{1v}$ up to $M_K$-constants. Let $X$ be a projective variety over $K$.  
We say that a subset $Y$ of $X(K)\times M_K$ is {\it affine $M_K$-bounded} if there is an affine open subset $X_0\subset X$   over $K$ with   a system of affine coordinates $x_1,\hdots,x_n$ and a  $M_K$-constant $\{\gamma_v\}_{v\in M_K}$ such that $Y\subset X_0 (K) \times M_K$ and  
$$
\min_{1\le i\le n}v(x_i(P))\ge  \gamma_v, \qquad\text{for all  } (P,v)\in Y.
$$
Finally, we say that the set $Y$ is {\it  $M_K$-bounded}  if it is a finite union of affine $M_K$-bounded sets.

The local Weil functions associated to divisors can be defined geometrically.
We refer to \cite[Section 16]{Vojta} for the following definitions and remarks.
\begin{definition}
Let $X$ be a projective variety defined over ${K}$, which is the function field of a smooth projective curve $C$ over an algebraically closed field ${\bf k}$ of characteristic zero.   
A {\it proper model } of $X$ is a  normal  variety $\mathcal X$, given with a proper flat morphism $\rho:\mathcal X\to C$ such that the generic fiber is isomorphic to $X$.
\end{definition}
In the above construction, rational points in $X(K)$ correspond bijectively  to sections $i:C\to \mathcal X$. 
\begin{definition}\label{model}
Let $X$ be a projective variety defined over ${K}$ and   $\rho:\mathcal X\to C$ be a proper model of $X$.  Let $D$ be a Cartier divisor on $X$ over $K$.  Then $D$ extends to a Cartier divisor $\mathcal D$ on $\mathcal X$.  Let $P\in  X(K)$ not lying on ${\rm Supp} (D)$ and let $i_P:C\to \mathcal X$ be the corresponding section of $\rho$; thus the image of $i_P$ is not contained in $\mathcal D$.  Then $i_P^*\mathcal D$ is a Cartier divisor on $C$.   
Let $\p\in C$ and $n_{D,\p}$ be the multiplicity of $\p$ in  $i_P^*\mathcal D$.
The  \emph{local Weil function} $\lambda_{D,\p}: X(K)\setminus {\rm Supp} (D)  \to \mathbb R$
is defined by
\begin{align}\label{Weilmodel}
\lambda_{D,\p}(P):=n_{D,\p}.
\end{align}
\end{definition}
\begin{remark*}
If we choose another proper model $\mathcal X'$ and we let $\lambda'_{D,\p}$ be the associated local Weil function,
then, $\lambda_{D,\p}-\lambda'_{D,\p}$ is bounded by a $M_K$ constant.  Therefore, we will choose a model that suits the best for our purposes.
\end{remark*} 

For a finite subset $S$ of $C(\mathbf{k})$, we will denote by 
 $$
 m_{D,S}(P):=\sum_{\p\in  S}\lambda_{D,\p}(P), 
$$
$$
N_{D,S}(P):=\sum_{\p\in C(\mathbf{k})\setminus S}\lambda_{D,\p}(P),\quad\text{and}\quad 
N^{(m)}_{D,S}(P):=\sum_{\p\in C(\mathbf{k})\setminus S}\min\{m,\lambda_{D,\p}(P) \},
$$ 
where  $m$ is a positive integer, and by
$$
 h_D(P):=m_{D,S}(P)+N_{D,S}(P).
$$

Finally, we reinterpret the definition of Campana integral points via the geometric model.
Let $(X,\Delta_{\alpha})$ be a Campana orbifold as in Definition \ref{campanapair}, where $\Delta:=\Delta_{\epsilon}=\sum_{\alpha\in \mathcal A_{\epsilon}} \epsilon_{\alpha}D_{\alpha}$.   We can choose a  {\it good integral model away from $S$} which is a proper model $\rho:\mathcal X\to C$ over $\mathcal O_S$ such that $\mathcal X$ is regular.  We denote by $\mathcal D_{\alpha}$ the Zariski closure of $D_{\alpha}$ in $\mathcal X$, and we write $(\mathcal X,\mathcal D_{\epsilon})$ for the model, where $\mathcal D_{\epsilon}:=\sum_{\alpha\in\mathcal A_{\epsilon}}\mathcal D_{\alpha}.$
Then, a rational point $P\in X(K)$ 
extends uniquely to an integral point $\mathcal P\in \mathcal X\mathcal (\mathcal O_S)$.
 
 Following the convention in \cite[Definition 3.4]{PTV2021}, we define the following.
\begin{definition}
With the notation introduced above, we say that $P\in X(K)$ is a {\it Campana $(\Delta,S)$-integral point}  with respect to  $(\mathcal X,\mathcal D_{\epsilon})$ if the following holds:
\begin{enumerate}
\item for all $\alpha$ with $\epsilon_{\alpha}=1$ and $\p\notin S$, 
$\lambda_{D_{\alpha},\p}(P):=n_{D_{\alpha},\p}=0$, i.e. $P\in (X\setminus\cup_{\epsilon_{\alpha}=1} D_{\alpha})(\mathcal O_S)$.

\item for $\p\notin S$, and all $\alpha\in \mathcal A_{\epsilon}$ with both $\epsilon_\alpha<1$ and $n_{D_{\alpha},\p}>0$, we have 
$$
  n_{D_{\alpha},\p}\ge \frac 1{1-\epsilon_{\alpha}}. 
$$
\end{enumerate}
\end{definition}
Consequently, the collection of Campana $(\Delta,S)$-integral points with respect to  $(\mathcal X,\mathcal D_{\epsilon})$ is a set of Campana $(\Delta,S)$-integral points as in Definition \ref{campdefi}.
 
\begin{example}\label{example2}
Let $K$ be the function field of a smooth projective curve $C$ over be an algebraically closed field ${\bf k}$ of characteristic zero.  Let $S\subset C(\mathbf{k})$ be a finite subset. 
Let $X=\mathbb P^2$, $F$ be a non-constant homogeneous polynomial in $\mathcal O_S^*[x_0,x_1,x_2]$  and $D=[F=0]$.  We can take $\mathcal X= C\times \mathbb P^2$.
Let $P=[f_0:f_1:f_2]\in \mathbb P^2(K)$, i.e. $f_i\in K$.  Then, 
$$
\lambda_{D,\p}(P)=v_{\p}(F(f_0,f_1,f_2))-\min\{v_{\p}(f_0),v_{\p}(f_1),v_{\p}(f_2)\} 
$$
for all $\p\notin S$.
Let $\Delta=[x_0=0]+\frac 12[x_1=0]+\frac 23[x_0+x_1+x_2=0]$.
If $P=[f_0:f_1:f_2]\in \mathbb P^2(K)$ is a  Campana $(\Delta,S)$-integral point  with respect to  $(\mathcal X,\mathcal D_{\epsilon})$,  then,  $v_{\p}(f_0)=\min\{v_{\p}(f_0),v_{\p}(f_1),v_{\p}(f_2)\}$  for all $\p\notin S$.  Therefore,  $(\frac{f_1}{f_0}, \frac{f_2}{f_0})\in \mathcal O_S^2$, $v_{\p}(\frac{f_1}{f_0}) \ge 2$, and $v_{\p}(1+\frac{f_1}{f_0}+\frac{f_2}{f_0}) \ge 3$, for all $\p\notin S$.
   \end{example}
 
We will use the following slightly modified result of Brownawell-Masser \cite{BM}.
\begin{theorem}\label{BrMa}  
Let $S$ be a  finite subset of $C({\bf k}).$
If $f_{0},f_{1},\hdots,f_{n}\in K^* $ and $f_0+f_{1}+\cdots+f_{n}=1$,
then either some proper subsum of $f_0+f_{1}+\cdots+f_{n}$ vanishes or
\[
\max_{0\le i\le n}\{h(f_{i})\}\le \sum_{i=0}^n \left(N_{ 0,S}^{(n)}(f_i)+N_{\infty,S}^{(n)}(f_i)\right)+\frac{n(n+1)}{2}\chi_S^+(C).
\]
\end{theorem}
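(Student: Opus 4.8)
The statement is a mild strengthening of the Brownawell--Masser theorem \cite{BM}: the only modification from the classical version is that on the right-hand side the counting functions are truncated at level $n$ (rather than being full counting functions), and the error term is the sharp $\frac{n(n+1)}{2}\chi_S^+(C)$ with the $+$-truncation on $\chi_S(C)$. My plan is to reduce to the classical Wronskian estimate exactly as Brownawell and Masser do, and then to observe that their argument in fact only ever uses each zero or pole of each $f_i$ up to multiplicity $n$, so the truncated bound is what the proof genuinely delivers.

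\textbf{Step 1: set-up via the Wronskian.} Suppose no proper subsum of $f_1+\cdots+f_n$ vanishes. Consider the Wronskian $W := W(f_1,\dots,f_n)$ with respect to a fixed derivation $d/dt$ on $K$ (over $\mathbf{k}$; recall $\mathrm{char}\,\mathbf{k}=0$). The hypothesis that no proper subsum vanishes, combined with $f_1+\cdots+f_n=1$, forces $f_1,\dots,f_n$ to be linearly independent over $\mathbf{k}$ (a standard consequence: a linear dependence among the $f_i$ together with the relation $\sum f_i = 1$ would produce a vanishing proper subsum after a change of basis argument, or one argues directly), hence $W\ne 0$. Differentiating $\sum f_i = 1$ and using Cramer's rule gives each $f_i = W_i/W$ where $W_i$ is (up to sign) the Wronskian of the other $f_j$'s together with the constant $1$; equivalently $f_i$ is a ratio of determinants built from $f_1',\dots,f_n'$ and their higher derivatives. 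The key consequence is that $W_i/W$ expresses $f_i$ as a quotient of $(n-1)\times(n-1)$ Wronskian-type determinants in the derivatives.

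\textbf{Step 2: local analysis and the truncation.} Fix $\p\in C(\mathbf{k})$ and a uniformizer $t_\p$. The classical input is the estimate of $v_\p$ of Wronskians: if $g_1,\dots,g_r$ are functions with $v_\p(g_j)\ge -M$ then $v_\p(W(g_1,\dots,g_r))\ge -rM - \binom{r}{2}\max\{0,\ldots\}$, with a correction supported only at points where the $g_j$ have poles or where the local expansions degenerate, and that correction is bounded by $\binom{r}{2}$ \emph{per such point}. Carrying this through the expression $f_i=W_i/W$, the height bound $h(f_i)=\sum_\p v_\p^\infty(f_i)$ is controlled by: (a) the contribution of the actual zeros and poles of the $f_j$, but crucially \emph{each such point contributes its multiplicity capped at $n$} because in an $n\times n$ Wronskian a zero of order $> n$ of some entry contributes no more to the determinant's order than a zero of order $n$ would (the relevant minors stabilize); this is precisely where $N_{0,S}^{(n)}$ and $N_{\infty,S}^{(n)}$, rather than the full $N_{0,S},N_{\infty,S}$, appear; plus (b) the ``ramification'' term, one copy of $\binom{n+1}{2}$-type contribution at each point of bad reduction of the data, which over a curve of genus $\mathfrak g$ with the $n+1$ functions $f_1,\dots,f_n,1$ is bounded by $\binom{n+1}{2}$ times the number of such points; the usual Riemann--Hurwitz / degree-of-different bookkeeping packages this as $\frac{n(n+1)}{2}\chi_S^+(C)$, with $S$ absorbing the chosen ``allowed'' places and the $+$ handling the genus-$0$, small-$S$ case where $2\mathfrak g-2+|S|$ could be negative.

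\textbf{Step 3: assembling the bound.} Summing the local inequalities over all $\p$ and using $\sum_\p v_\p(h)=0$ for $h\in K^*$, one gets
\[
\max_{1\le i\le n} h(f_i) \le \sum_{i=1}^n\big(N_{0,S}^{(n)}(f_i)+N_{\infty,S}^{(n)}(f_i)\big) + \frac{n(n+1)}{2}\chi_S^+(C),
\]
which is the claim. The main obstacle is Step 2: one must verify carefully that (i) the multiplicity truncation at exactly $n$ (not $n-1$ or $n+1$) is what the Wronskian determinant bound yields, and (ii) the genus/$S$ error constant is exactly $\frac{n(n+1)}{2}$ with the correct $\chi_S^+$ and not, say, $\chi_S$ or a larger multiple. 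Both points are present in \cite{BM} in the number-field/function-field setting; since the excerpt cites this as a ``slightly modified result of Brownawell--Masser,'' the honest proof is to recall their Wronskian argument and remark that tracking the truncation through it gives the stated form verbatim. I would therefore present the proof as: recall the Wronskian, invoke the standard local estimate with explicit truncation at $n$, sum, and cite \cite{BM} for the detailed bookkeeping of the $\chi_S^+$ term.
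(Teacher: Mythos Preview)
The paper does not prove this theorem; it is stated as a ``slightly modified result of Brownawell--Masser'' and cited to \cite{BM} without argument. Your Wronskian outline is indeed the method of \cite{BM}, so in that sense your proposal agrees with what the paper defers to.

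One genuine correction to Step~1: the implication ``no proper subsum vanishes $\Rightarrow$ $f_1,\dots,f_n$ are linearly independent over $\mathbf{k}$'' is false. For $n=2$, take $f_1=2$, $f_2=-1$ in $\mathbf{k}^*$: no proper subsum vanishes, yet $f_1+2f_2=0$. Your parenthetical justification (``a linear dependence \dots would produce a vanishing proper subsum'') does not work, because a $\mathbf{k}$-linear relation need not have all coefficients equal to $1$. What one actually does when $W(f_1,\dots,f_n)=0$ is: take a nontrivial relation $\sum c_i f_i=0$, combine it with $\sum f_i=1$ to get $\sum(1+tc_i)f_i=1$ for any $t\in\mathbf{k}$, choose $t$ so that some coefficient $1+tc_j$ vanishes, and obtain a relation $\sum_{i\ne j} g_i=1$ with $g_i=(1+tc_i)f_i$ and $h(g_i)=h(f_i)$; then induct on $n$. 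This reduction, not the vanishing-subsum alternative, is what lets the Wronskian argument proceed. Apart from this gap your sketch of the local Wronskian estimate, the emergence of the level-$n$ truncation, and the packaging of the ramification term as $\tfrac{n(n+1)}{2}\chi_S^+(C)$ is on the right track, and deferring the bookkeeping to \cite{BM} is exactly what the paper does.
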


 \section{Proof of Theorem \ref{main_thm}}\label{mainmethod}
The proof of Theorem~\ref{main_thm} relies on the main technical theorem of \cite{GNSW}, which we state below.

 \begin{theorem}[{\cite[Theorem 4]{GNSW}}]\label{GNSW}\label{main_thmGNSW}
Let  $G\in K[ x_1,\hdots, x_n]$ be a non-constant    polynomial  with neither monomial  factors nor repeated factors. 
Then for any $\epsilon>0$,   there exist a positive real number $c_0$  and  
a proper Zariski closed subset $Z$ of $\mathbb{A}^n(K)$ such that for all  $(u_1,\hdots,u_n)\in ({\cal O}_{S}^*)^{n}\setminus Z$, we have either
\begin{enumerate}
\item[{\rm(i)}]   the inequality 
\begin{align}
 \max_{1\le i\le n}\{ h(u_i)\}\le  c_0  \left(\tilde h(G) + \max\{1,2\gen-2+|S|\}\right) 
\end{align}
holds, 
  \item[{\rm(ii)}]   or the following two statements hold:

 \begin{enumerate}
 \item[{\rm(a)}]  $N_{0,S}\left( G(u_1,\hdots,u_n) \right)-N_{0,S}^{(1)}\left( G(u_1,\hdots,u_n)\right)\le \epsilon \max_{1\le i\le n}\{ h(u_i)\}$,\newline  if $G(u_1,\hdots,u_n)\ne 0$.
 \item[{\rm(b)}]  If $G(0,\hdots,0)\ne 0$ and $\deg_{x_i}G=\deg G=d$ for $1\le i\le n$, then 
 \begin{align*} 
 N_{0,S}^{(1)}\left( G(u_1,\hdots,u_n)\right)\ge  \deg G \cdot (1-\epsilon)\cdot h(1,u_1,\hdots,u_n).
 \end{align*}
  \end{enumerate}
  \end{enumerate}
 Here, $c_0$ can be  effectively bounded from above in terms of $\epsilon$, $n$, and the degree of $G$.    Moreover, the exceptional set $Z$ can be expressed as the zero locus of a finite set
$\Sigma\subset K[x_1,\ldots,x_n]$ with the following properties: {\rm(Z1)} $\Sigma$ depends on $\epsilon$ and $G$ and can be determined explicitly,  {\rm(Z2)} $\vert \Sigma\vert$ and the degree of each polynomial in $\Sigma$ can be effectively bounded from above in terms of $\epsilon$, $n$, and the degree of $G$, and  {\rm(Z3)}  if  $G\in \mathbf{k}[x_1,\ldots,x_n]$, then  we may take $\Sigma\subset \mathbf{k}[x_1,\ldots,x_n]$.
 \end{theorem}

We will also need the following result for the proof of Theorem \ref{main_thm}.
\begin{theorem}\label{ProximityAffine} 
Let  $F \in K[x_1,\dots,x_n]$ be a non-constant polynomial, and assume that $F(0,\dots,0)\ne 0$.   Let $W$ be the Zariski closed subset   of $\mathbb A^n(K)$ defined by the vanishing of all non-trivial subsums (including $F$ itself) appearing in the expansion of $F$.  Let $S$ be a finite subset of $C({\mathbf k})$.  Then,
for all $(g_1,\hdots,g_n)\in (\mathcal O_S^*)^n\setminus W$, we have 
 \begin{align*} 
 \sum_{\p\in S} v_{\p}^0(F(g_1,\hdots,g_n)) \le  
\tilde c_1  \chi_S^+(C) +\tilde c_2h(F),
  \end{align*}
  where $d=\deg F$, $\tilde c_1=\frac12 \binom{n+d}{n}  \left(\binom{n+d}{n}+1\right)$ and $\tilde c_2=2 \left(\binom{n+d}{n}-1\right)\tilde c_1$.
   \end{theorem}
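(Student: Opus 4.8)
The plan is to reduce the left-hand side to an $S$-unit equation in the monomials of $F$ and apply the Brownawell--Masser estimate, paralleling the proof of Corollary~\ref{CorBrMa} (which is the degenerate case $F(\mathbf u)=0$). Since $h(\lambda F)=h(F)$ for every $\lambda\in K^*$, normalise $F(\mathbf 0,\dots,\mathbf 0)=1$ and write $F=\sum_{\mathbf i\in I_F}a_{\mathbf i}\mathbf x^{\mathbf i}$ with $\mathbf 0\in I_F$, $a_{\mathbf 0}=1$, $N:=|I_F|\le\binom{n+d}{n}$. Put $g:=F(u_1,\dots,u_n)$; as $Z$ contains the hypersurface $\{F=0\}$ and $\mathbf u=(u_1,\dots,u_n)\notin Z$, we have $g\in K^*$, and with $a_{\mathbf 0}=1$ one has $v_\p^0(g)=v_\p^\infty(1/g)$ at every $\p$, so the quantity to bound is $\sum_{\p\in S}v_\p^\infty(1/g)$. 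Dividing $g=1+\sum_{\mathbf i\ne\mathbf 0}a_{\mathbf i}\mathbf u^{\mathbf i}$ by $g$ gives $f_{\mathbf 0}+\sum_{\mathbf i\ne\mathbf 0}f_{\mathbf i}=1$ with $f_{\mathbf i}:=a_{\mathbf i}\mathbf u^{\mathbf i}/g\in K^*$ and $f_{\mathbf 0}=1/g$. A vanishing proper subsum $\sum_{\mathbf i\in J}f_{\mathbf i}=0$ with $\emptyset\ne J\subsetneq I_F$ is equivalent to $\sum_{\mathbf i\in J}a_{\mathbf i}\mathbf u^{\mathbf i}=0$, i.e.\ to $\mathbf u$ lying on one of the subsum hypersurfaces making up $Z$, which is excluded. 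Hence Theorem~\ref{BrMa}, applied with these $N$ summands, yields
\[
h(f_{\mathbf 0})\le\max_{\mathbf i\in I_F}h(f_{\mathbf i})\le\sum_{\mathbf i\in I_F}\bigl(N^{(N)}_{0,S}(f_{\mathbf i})+N^{(N)}_{\infty,S}(f_{\mathbf i})\bigr)+\tfrac{N(N+1)}{2}\,\chi_S^+(C).
\]

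Next I extract the target. Using $h(f_{\mathbf 0})=\sum_{\p\in S}v_\p^\infty(f_{\mathbf 0})+N_{\infty,S}(f_{\mathbf 0})$ together with $N^{(N)}_{\infty,S}(f_{\mathbf 0})\le N_{\infty,S}(f_{\mathbf 0})$, the term $N^{(N)}_{\infty,S}(f_{\mathbf 0})$ on the right cancels, leaving
\[
\sum_{\p\in S}v_\p^0(g)\le\sum_{\mathbf i\in I_F}N^{(N)}_{0,S}(f_{\mathbf i})+\sum_{\mathbf i\ne\mathbf 0}N^{(N)}_{\infty,S}(f_{\mathbf i})+\tfrac{N(N+1)}{2}\,\chi_S^+(C).
\]
The terms $\sum_{\mathbf i}N^{(N)}_{0,S}(f_{\mathbf i})$ (which includes $N^{(N)}_{0,S}(f_{\mathbf 0})=N^{(N)}_{\infty,S}(g)$) are routine: a pole of $g=F(\mathbf u)$ off $S$, and a zero of a monomial $a_{\mathbf i}\mathbf u^{\mathbf i}$ off $S$, can occur only at a zero or pole of some $u_j$ (or of a coefficient), through $v_\p^\infty(g)\le\max_{\mathbf i}\bigl(v_\p^\infty(a_{\mathbf i})+\sum_j i_j v_\p^\infty(u_j)\bigr)$ and the analogous bound for $v_\p^0(a_{\mathbf i}\mathbf u^{\mathbf i})$. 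Truncating at $N$ and passing, as in Corollary~\ref{CorBrMa}, to the enlarged set $S':=S\cup\{\p\notin S:\ u_j\text{ or }a_{\mathbf i}\text{ is a non-unit at }\p\}$ — whose size $|S'\setminus S|$ is itself $O\bigl(\sum_iN^{(1)}_{0,S}(u_i)+\sum_iN^{(1)}_{\infty,S}(u_i)+h(F)\bigr)$ — these contributions are absorbed into the admissible quantities $\sum_iN^{(1)}_{0,S}(u_i)$, $\sum_iN^{(1)}_{\infty,S}(u_i)$ and $h(F)$.

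The step I expect to be the main obstacle is the remaining term $\sum_{\mathbf i\ne\mathbf 0}N^{(N)}_{\infty,S}(f_{\mathbf i})$, which records the zeros of $g=F(\mathbf u)$ lying off $S$: at a place $\p\notin S'$ with $v_\p(g)=k>0$ every $f_{\mathbf i}$ acquires a pole of order $k$ at once, and such a $\p$ is precisely a place where the monomial values $a_{\mathbf i}\mathbf u^{\mathbf i}$ cancel modulo $\p$, i.e.\ where the reduction of $\mathbf u$ meets the reduction of $Z$. There is no bound for $\sum_{\p\notin S}\min\{N,v_\p^0(F(\mathbf u))\}$ purely in terms of the right-hand data, so the bare unit-equation estimate does not suffice; this is exactly the point where the hypothesis $\mathbf u\notin Z$ has to be used quantitatively, beyond the vanishing-subsum dichotomy. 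In the plan this is supplied by the GCD theorem for $S$-integers of Section~\ref{gcd}, applied to the places of $S'$ at which $g$ vanishes — the role of that theorem being a quantitative control on how often the specialisation of a tuple of $S$-integers can be $\p$-adically close to $Z$, which is precisely what closes this gap; it is the technical heart of the argument. Once this input is in place, the Brownawell--Masser genus coefficient for $N=\binom{n+d}{n}$ summands produces $\tilde c_1=\tfrac12\binom{n+d}{n}\bigl(\binom{n+d}{n}+1\bigr)$, and the bookkeeping of the coefficient heights produces $\tilde c_2=2\bigl(\binom{n+d}{n}-1\bigr)\tilde c_1$, as asserted.
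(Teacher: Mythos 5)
Your reduction to a unit equation is the right general idea, but the way you arrange it creates exactly the obstacle you then cannot remove, and your proposed repair does not work. By dividing through by $g=F(\mathbf u)$ you place the uncontrolled quantity --- the zeros of $F(\mathbf u)$ outside $S$ --- on the right-hand side inside each of the terms $N^{(N)}_{\infty,S}(f_{\mathbf i})$, $\mathbf i\ne\mathbf 0$ (up to $\binom{n+d}{n}-1$ truncated copies of it), while your left-hand side retains only the $S$-part $\sum_{\p\in S}v^0_{\p}(g)$. You correctly observe that nothing in the hypotheses bounds these terms, but your suggestion to close the gap with the GCD theorem of Section \ref{gcd} fails on two counts: it is circular, since the proof of Theorem \ref{movinggcdaffine} itself invokes Theorem \ref{ProximityAffine} (precisely to bound $\sum_{\p\in \tilde S}v^0_{\p}(F_1(\mathbf g))$); and it is not even the right kind of statement, since Theorem \ref{movinggcdaffine} controls $h_{\rm gcd}$ of a coprime pair only under the truncation hypothesis \eqref{truncate1} and with exceptional alternatives (a height bound, a multiplicative relation), whereas Theorem \ref{ProximityAffine} is an unconditional inequality valid for every $(u_1,\hdots,u_n)\in(K^*)^n\setminus Z$ with no exceptional case.

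The paper's proof removes the difficulty with no extra input simply by not dividing. Normalise $F=1+\sum_{\mathbf i\in I}a_{\mathbf i}\mathbf x^{\mathbf i}$ and apply Theorem \ref{BrMa} to $1=F(\mathbf u)-\sum_{\mathbf i\in I}a_{\mathbf i}\mathbf u^{\mathbf i}$, after enlarging $S$ to $S'$ by adding all places where some $a_{\mathbf i}$ or some $u_j$ fails to be a unit (so each monomial term is an $S'$-unit and $F(\mathbf u)\in\mathcal O_{S'}$, with $|S'|$ bounded by the admissible data). Then every counting function on the right vanishes except the truncated zero count of $F(\mathbf u)$ itself, and one gets
\begin{align*}
h(F(\mathbf u))\le N_{0,S}(F(\mathbf u))+\tilde c_1\Big(\sum_{i=1}^n N^{(1)}_{0,S}(u_i)+\sum_{i=1}^n N^{(1)}_{\infty,S}(u_i)+\chi^+_S(C)\Big)+\tilde c_2\,h(F).
\end{align*}
Since $h(F(\mathbf u))=\sum_{\p\in C}v^0_{\p}(F(\mathbf u))=\sum_{\p\in S}v^0_{\p}(F(\mathbf u))+N_{0,S}(F(\mathbf u))$, the off-$S$ zeros occur once on each side and cancel; this single cancellation is what your division by $g$ destroys, because the one full count hidden in $h(1/g)$ cannot absorb the many truncated copies your arrangement produces. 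Your remaining steps (nonvanishing of subsums from $\mathbf u\notin Z$, the size estimate for $S'$, the values of $\tilde c_1,\tilde c_2$) do match the paper once this rearrangement is made, though you should apply Brownawell--Masser directly with $S'$ rather than with $S$ and only afterwards pass to $S'$.
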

 
\begin{proof}
Since $F(0,\hdots,0)\ne 0$ and $h(F)=h(\lambda F)$ for any $\lambda\in K^*$, we may assume that $F(x_1,\hdots,x_n)=1+\sum_{\mathbf i\in I}a_{\mathbf i} \mathbf{x}^{\mathbf i}$, where $\mathbf{i}\in\mathbb{Z}_{\ge0}^{n}$,
 $|\mathbf{i}|\le d$, and $a_{\mathbf i}\ne 0$.  Then, we have
 \begin{align}\label{Fequ}
 1=F( {\mathbf g})-\sum_{\mathbf i\in I}a_{\mathbf i}  {\mathbf g}^{\mathbf i},
 \end{align}
where $ {\mathbf g}=(g_1,\hdots,g_n)\in (\mathcal O_S^*)^n\setminus W$.
Therefore,  no subsum of $\sum_{\mathbf i}a_{\mathbf i} {\mathbf g}^{\mathbf i}$ vanishes.  
Let
 $S'=\{\p\in C\setminus S\,|\, v_{\p}(a_{\mathbf i})< 0 \text{ for some } {\mathbf i\in I} \}\cup S.$  Then   $F( {\mathbf g})\in\mathcal O_{S'}$, and
\begin{align}\label{sizeS'}
 |S'|\le |S|+ c_1 h(F),
\end{align}
 where $c_1=  \binom{n+d}{n}-1$.
Applying Theorem \ref{BrMa} to \eqref{Fequ}  with respect to $S'$ and using \eqref{sizeS'}, we obtain
 \begin{align}\label{htbdd}
 h(F( {\mathbf g})) &\le N_{0,S}(F( {\mathbf u})) 
  +\tilde c_1\left(\max\{0,2\gen-2+|S'|\} \right)\cr
  &\le N_{0,S}(F( {\mathbf u})) 
  +\tilde c_1\left( \chi_S^+(C)+  c_1 h(F)\right), 
\end{align}
where $\tilde c_1=\frac12 (c_1+1)(c_1+2)$.
Since 
$$h(F( {\mathbf g}))=\sum_{\p\in C} v_{\p}^0(F( {\mathbf g}))=  N_{0,S}(F( {\mathbf g})) +\sum_{\p\in S} v_{\p}^0(F( {\mathbf g})),$$
  the inequality \eqref{htbdd} implies
 \begin{align*} 
 \sum_{\p\in S} v_{\p}^0(F( {\mathbf g})) \le  
\tilde c_1 \chi_S^+(C) + c_1\tilde c_1h(F),
 \end{align*}
 as asserted.
\end{proof}

\begin{proof}[Proof of Theorem \ref{main_thm}]
  Let $\ell$ be a sufficiently large integer to be determined later.  Let ${\mathbf g}=(g_1,\hdots,g_n)$, and define
\begin{align*} 
\tilde S_{\mathbf g}=\{\p\in C\setminus S\,|\, v_{\p}( g_i)\ne 0\quad\text{for some $1\le i\le n$}\}\cup S.
\end{align*}
Then, together with \eqref{truncate1}, we obtain
\begin{align}\label{sizeS}
|\tilde S_{\mathbf g}  |\le \sum_{i=1}^n \big( N_{0,S}^{(1)}(g_i)+N_{\infty,S}^{(1)}(g_i) \big) +|S|\le  \frac{n}{\ell} \max_{1\le i\le n}\{h(g_{i})\} +|S|,
\end{align}
 and moreover $g_i\in\mathcal O_{\tilde S_{\mathbf g}}^*$ for  all  $1\le i \le n$.  
Let $\epsilon>0$ be given.  We now apply Theorem \ref{main_thmGNSW} to obtain a  positive real number $\tilde c_0$  and  
a proper Zariski closed subset $Z$ of $\mathbb{A}^n(K)$ such that  if  $(g_1,\hdots,g_n)\notin  Z$,   either 
\begin{align}\label{htgbd}  
 \max_{1\le i\le n}\{ h(g_i)\}&\le   \tilde c_0  \left(\tilde h(G) + \max\{1,2\gen-2+|\tilde S_{\mathbf g}|\}\right)\cr
 &\le \tilde c_0  \left(\tilde h(G)+ \frac{n}{\ell} \max_{1\le i\le n}\{h(g_{i})\} + \max\{1,2\gen-2+|S|\}\right)
\end{align}
by \eqref{sizeS}, or both of the following two inequalities hold:
\begin{align}\label{Ns} 
N_{0,\tilde{S}_{\mathbf{g}}}\left( G(g_1, \dots, g_n) \right) - N_{0,\tilde{S}_{\mathbf{g}}}^{(1)}\left( G(g_1, \dots, g_n) \right) &\leq \epsilon \max_{1 \leq i \leq n} \{ h(g_i) \},
\end{align} 
and
 \begin{align}\label{truncateNs}  
 N_{0,\tilde{S}_{\mathbf{g}}}^{(1)}\left( G(g_1, \dots, g_n) \right) &\geq \deg G \cdot (1-\epsilon) \cdot h(1, g_1, \dots, g_n)
 \end{align}
if we further assume that $\deg_{x_i}G=\deg G=d$ for $1\le i\le n$.

Since the constant $c_0$ and the exceptional set $Z$ in Theorem \ref{main_thmGNSW}  does not depend on $S$ and the arguments (i.e. the $u_i$), it is important to note that they also do not depend on ${\mathbf g}$ in the current context.
Now choose $\ell>2\tilde  c_0n$ and $c_0=2\tilde c_0$.  Then  it follows from \eqref{htgbd} that 
\begin{align}\label{htgbd2}  
 \max_{1\le i\le n}\{ h(g_i)\} \le  c_0  \left(\tilde h(G) + \max\{1,2\gen-2+|S|\}\right).
\end{align}
Next, we aim to show that
\begin{align}\label{NS} 
N_{0}\left( G(g_1,\hdots,g_n) \right)-N_{0}^{(1)}\left( G(g_1,\hdots,g_n)\right)\le 2\epsilon \max_{1\le i\le n}\{ h(u_i)\},
\end{align}
under the assumption that \eqref{htgbd2} does not hold. (If necessary, we may further increase the constant $c_0$ accordingly.)
Let $W$ be the Zariski closed subset   of $\mathbb A^n(K)$ defined by the vanishing of all possible subsums appearing in the expansion of $G$ (including $G$ itself).   Since $G(0,\dots,0)\ne 0$, we can apply    Theorem \ref{ProximityAffine}  to $\mathbf g\in (\mathcal O_{\tilde{S}_{\mathbf{g}}}^*)^n\setminus W$ . 
This gives
 \begin{align*} 
 \sum_{\p\in \tilde S_{\mathbf g}} v_{\p}^0(G(g_1,\hdots,g_n)) \le  
\tilde c_1 \chi_{\tilde S_{\mathbf g}}^+(C) +\tilde c_2h(G),
  \end{align*}
where $d=\deg G$, $\tilde c_1=\frac12 \binom{n+d}{n}  \left(\binom{n+d}{n}+1\right)$ and $\tilde c_2=2  \left(\binom{n+d}{n}-1\right)\tilde c_1$.
Using \eqref{sizeS}, we deduce
 \begin{align}\label{proxi}
 \sum_{\p\in \tilde S_{\mathbf g}} v_{\p}^0( G(g_1,\hdots,g_n)) 
&\le  
\tilde c_1\left(\frac{n}{\ell} \max_{1\le i\le n}\{h(g_{i})\}+\chi_S^+(C)\right)+\tilde c_2h(G)\cr
&\le \epsilon \max_{1\le i\le n}\{h(g_{i})\},
  \end{align}
provided we choose  $\ell>2n\tilde c_1\epsilon^{-1}$ and $c_0\ge  2\tilde c_2\epsilon^{-1}$. 
Thus, combining \eqref{Ns} and \eqref{proxi} yields \eqref{NS} for all $\mathbf g\notin Z\cup W$.

Finally, the proof of (b) follows easily.
Since $S\subset {\tilde S_{\mathbf g}}$, it is clear that 
$$
N^{(1)}_{0,S}( G(g_1,\hdots,g_n) )\ge N_{0,\tilde S_{\mathbf g}}^{(1)}( G(g_1,\hdots,g_n))\ge  \deg G \cdot (1-\epsilon)\cdot h(1,g_1,\hdots,g_n)
$$
  by \eqref{truncateNs}, as wanted.
 \end{proof}

 \section{Proof of Theorem \ref{toric}}\label{Theorem4}
 Before proving Theorem~\ref{toric}, we present the following example to illustrate the strategy of the proof.
Let $K$ be the function field of a smooth projective curve $C$ over be an algebraically closed field ${\bf k}$ of characteristic zero.  Let $S\subset C(\mathbf{k})$ be a finite subset. 
\begin{example}\label{example3}
We consider the pair $(\mathbb{P}^n, \mathbb{G}_m^n)$ as an admissible pair, with boundary divisor $D_0 := H_0 + \cdots + H_n$, where $H_i$ for $0 \le i \le n$ are the coordinate hyperplanes in $\mathbb{P}^n$. Let $\Delta = \ell D_0$ for a sufficiently large integer $\ell$.   Let $F$ be a non-constant homogeneous polynomial in ${\bf k}[x_0,\hdots,x_n]$  of degree $d$ with neither monomial  factors nor repeated factors and $D=[F=0]$.   Assume that $D$ is in general position with the coordinate hyperplanes $H_i$; that is, none of the values ${F(1,0,\ldots,0), \ldots, F(0,\ldots,0,1)}$ vanish.

Let $P = [f_0 : \cdots : f_n] \in \mathbb{P}^n(K)$, where each $f_i \in K$. Denote $\mathbf{f} = (f_0, \ldots, f_n)$, and for each place $\mathfrak{p} \in C({\bf k})$, define 
$$e_{\p}(\mathbf f):=\min\{v_{\p}(f_0),\hdots,v_{\p}(f_n)\}.$$
Then $P $ is an $(\Delta,S)$-integral points if 
$$v_{\p}(f_i)-e_{\p}(\mathbf f)\ge \ell\quad\text{for all }  \p\notin S.$$
For each $0 \le i \le n$, define the set 
$$ S_{P,i}:=  \{\p\notin S:\,  v_{\p}(f_i)>e_{\p}(\mathbf f)\}\cup S.$$ 
Then, 
\begin{align}\label{SizeSi}
| S_{P,i}|\le |S|+\frac{1}{\ell}  (v_{\p}(f_i)-e_{\p}(\mathbf f ))\le |S|+\frac{1}{\ell}\max_{0\le j\le n}\{h(\frac{f_j}{f_i})\}. 
\end{align}
Moreover,
\begin{align}\label{condPn}
 \frac{f_j}{f_{i}} \in\mathcal O_{S_{P,i}}   \quad\text{and}\quad\ N_{0,S_{P,i}}^{(1)}( \frac{f_j}{f_{i}})\le \frac 1{\ell }N_{0,S_{P,i}}( \frac{f_j}{f_{i}})\le \frac 1{\ell }h( \frac{f_j}{f_{i}}).
\end{align}

Now, for all $\mathfrak{p}\in  C(\mathbf{k})$, we compute the local Weil function:
$$
\lambda_{D,\p}(P)=v_{\p}(F(f_0\hdots,f_n))-de_{\p}(\mathbf f).
$$
 Define $T_{P,i}:=\{\p\notin S\,:\,  v_{\p}(f_i)=e_{\p}(\mathbf f)\}$.  Then, 
we can estimate the counting function as follows:
\begin{align*}
N_{D,S}(P) &:= \sum_{\p \notin S} \lambda_{D,\p}(P) = \sum_{\p \notin S} v_{\p}\big(F(f_0, \ldots, f_n)\big) - d e_{\p}(\mathbf{f}) \notag \\
&\le \sum_{i=0}^n \sum_{\p \in T_{P,i}} v_{\p}\left(F\left(\frac{f_0}{f_i}, \ldots, \frac{f_n}{f_i}\right)\right) 
= \sum_{i=0}^n \sum_{\p \notin S_{P,i}} v_{\p}\left(F\left(\frac{f_0}{f_i}, \ldots, \frac{f_n}{f_i}\right)\right).
\end{align*}
Similarly, we obtain:
 \begin{align*}
&N_{D,S}(P) - N^{(1)}_{D,S}(P)\\
&\le \sum_{i=0}^n \sum_{\p \notin S_{P,i}}  
v_{\p}\left(F\left(\frac{f_0}{f_i}, \ldots, \frac{f_n}{f_i}\right)\right)
- \min\left\{1, \, v_{\p}\left(F\left(\frac{f_0}{f_i}, \ldots, \frac{f_n}{f_i}\right)\right)\right\}.
\end{align*}
Combining inequalities \eqref{SizeSi} and \eqref{condPn}, we are now in a position to apply Theorem~\ref{main_thm} for each $i$.
 \end{example}

The proof of Theorem \ref{toric} is based on Theorem \ref{main_thm} and  Theorem \ref{ProximityAffine}.
We will use the following proposition adapting from \cite[Proposition 10.11]{Vojta}.
\begin{proposition}\label{bigdivisor}
Let $X$ be a projective variety over $K$.  Let $A$ and $B$ be two big divisors on $X$. Then, there exist constants $c_1$ and $c_2$ and a proper Zariski closed subset $Z$ of $X$, depending only on $A$ and $B$, such that 
$$
c_1h_A(P)-O(1)\le h_B(P)\le c_2 h_A(P)+O(1)
$$ 
for all $P\in X(K)\setminus Z$, where the implied constant depends only on $A$, $B$ and the choices of the height functions.
\end{proposition}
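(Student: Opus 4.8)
The plan is to deduce the statement from Kodaira's lemma together with the standard functorial properties of the Weil height machine over the function field $K$ (additivity in the divisor, invariance under linear equivalence, the fact that an ample divisor dominates every divisor, and the fact that the height of an effective divisor is bounded below away from its support). These are the function‑field analogues of the facts used in \cite[Proposition 10.11]{Vojta}, and the $M_K$‑constant bookkeeping behind them is the one recalled in Section \ref{weilfunctions}.

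First I would isolate two elementary facts. \textbf{(i)} If $H$ is an ample divisor on $X$ and $D$ is an arbitrary divisor, then $h_D\le c\,h_H+O(1)$ on all of $X(K)$ for a suitable constant $c$: since the ample cone is open and contains $H$, the class $nH-D$ is ample for $n\gg 0$, hence some positive multiple $m(nH-D)$ is base‑point‑free; the height of a base‑point‑free class is $\ge -O(1)$ everywhere (choose finitely many effective members of the linear system with empty common support), so $mn\,h_H-m\,h_D\ge -O(1)$, and additivity gives $h_D\le n\,h_H+O(1)$. \textbf{(ii)} If $E$ is an effective divisor, then $h_E(P)\ge -O(1)$ for all $P\in X(K)\setminus\mathrm{Supp}(E)$, because the local Weil functions $\lambda_{E,\p}$ are bounded below by an $M_K$‑constant off $\mathrm{Supp}(E)$.

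Next I would prove the following one‑sided statement: \emph{if $A$ is big and $D$ is any divisor, there exist $c>0$ and a divisor $Z_A\subset X$ depending only on $A$ with $h_D\le c\,h_A+O(1)$ on $X(K)\setminus Z_A$.} By Kodaira's lemma applied to the big divisor $A$, there are a positive integer $a$, an ample divisor $H_A$ and an effective divisor $E_A$ with $aA\sim H_A+E_A$. Additivity and linear‑equivalence invariance give $a\,h_A=h_{H_A}+h_{E_A}+O(1)$, so by \textbf{(ii)}, outside $Z_A:=\mathrm{Supp}(E_A)$ we have $h_{H_A}\le a\,h_A+O(1)$. Combining with \textbf{(i)} applied to $D$ and the ample divisor $H_A$ yields $h_D\le c_0\,h_{H_A}+O(1)\le c_0 a\,h_A+O(1)$ on $X(K)\setminus Z_A$, which is the claim.

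Finally I would apply this twice. Taking $D=B$ gives $h_B\le c_2\,h_A+O(1)$ off $Z_A=\mathrm{Supp}(E_A)$ with $c_2=c_0a$; taking instead the big divisor $B$ in the role of $A$ and $D=A$ gives $h_A\le c_0' b\,h_B+O(1)$ off $Z_B=\mathrm{Supp}(E_B)$, i.e. $c_1\,h_A-O(1)\le h_B$ with $c_1=1/(c_0'b)$. Setting $Z:=Z_A\cup Z_B$, which is a proper Zariski‑closed subset of $X$ (each piece is contained in the support of an effective divisor) and which, together with the constants, does not depend on $P$, finishes the proof. The only non‑formal input is Kodaira's lemma; the only point requiring care is that the comparison genuinely fails along a positive‑dimensional locus in general — a big but non‑ample $A$ may have bounded $h_A$ along a subvariety on which $h_B$ is unbounded — so $Z$ cannot be omitted, and the argument above identifies it explicitly as the union of the supports of the effective defect divisors coming from the two Kodaira decompositions.
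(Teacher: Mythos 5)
Your argument is correct, and it is essentially the standard proof of the statement: the paper itself gives no proof but simply cites \cite[Proposition 10.11]{Vojta}, whose underlying argument is exactly this combination of Kodaira's lemma with the functorial properties of the height machine (boundedness below of heights of effective divisors off their support, domination by an ample class). The only cosmetic difference is that you pass through an ample intermediary $H_A$; one can shortcut this by applying Kodaira's lemma directly to get $nA-B$ (and $nB-A$) linearly equivalent to an effective divisor for $n\gg0$, taking $Z$ to be the union of the two supports, but the content is the same.
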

  We note that the implied constants will be dropped (by increasing $c_1$ and $c_2$) when the height functions are fixed.

   \begin{proof} [Proof of Theorem \ref{toric}]
We fix a proper model $\mathcal X$ of $X$  so that the local Weil functions of divisors of $X$ are defined as in Definition \ref{model}. 
We recall the following  setup  of finding a natural finite open covering of $X$ from the proof of \cite[Theorem 4.4]{Levin:GCD}.
Let $\Sigma$ be the fan corresponding to the smooth projective toric variety $X$.  Then, there is a finite affine  covering $\{X_{\sigma} \}$ of $X$, where $\sigma\in \Sigma$ is a $n$-dimensional smooth cone with an isomorphism $i_{\sigma}: X_{\sigma}\to\mathbb A^n$.  This isomorphism restricts to an automorphism of $\mathbb G_m^n$, where we identify $\mathbb G_m^n\subset X_{\sigma}$ naturally as a subset of $X$ and 
$\mathbb G_m^n\subset\mathbb A^n$ in the standard way such that $\mathbb A^n\setminus \mathbb G_m^n$ consists of the affine coordinate hyperplanes $\{x_i=0\}$, $1\le i\le n$.  Moreover, by Proposition \ref{bigdivisor} {\blue and \cite[Remark 10.12]{Vojta}}, there exist  non-zero constants $b_{\sigma,A}$, $c_{\sigma,A}$ and  a proper closed subset $Z_{\sigma,A}\subset X_{\sigma}$, depending on $\sigma$ and  $A$ such that 
\begin{align}\label{htsigma}
b_{\sigma,A} h_A(P)\le h( {i}_{\sigma}(P))\le c_{\sigma,A} h_A(P)
\end{align}
for all $P\in X_{\sigma}(K)\setminus Z_{\sigma,A}\subset X(K)$,  where $ {i}_{\sigma}(P) =(u_1,\hdots, u_n )$ and $h( {i}_{\sigma}(P)):=\max_{1\le i\le n}\{h(u_i)\}$ (note that we are including the coordinate hyperplane of $X_{\sigma}$ as a subset of $Z_{\sigma,A}$).
The pullback $(i_{\sigma}^{-1})^*(D|_{X_{\sigma}})$ of $D$ to $\mathbb A^n$ is defined by some nonzero polynomial $f_{\sigma}\in K[x_1,\hdots,x_n]$, which does not vanish at the origin since $D$ is in general position with the boundary of $\mathbb G_m^n$ in $X$.  We will take  $f_{\sigma}$ with $f_{\sigma}(0,\hdots,0)=1$.

 Let  $(X,\Delta)$ be an  Campana orbifold associated to $(X,\mathbb G_m^n)$ with multiplicity at least $\ell$, where $\ell$ is a sufficiently large integer to be determined later. 
Let $\mathcal R$ be a set of $(\Delta, S)$-integral points. 
Let  $P\in  \mathcal R\setminus {\rm Supp}(D)$.   By  \cite[Chapter 10, Proposition 1.2]{LangDG}, for each $\q\notin S$ we find  some $\sigma$ (depend on $\q$) such that $P\in X_{\sigma}(K)\setminus Z_{\sigma,A}$ with  $v_{\q}( {i}_{\sigma}(P)) :=\min\{v_{\q}(u_1),\hdots,v_{\q}(u_n) \}\ge 0$.  Then, we have
 \begin{align}\label{Psigma}
\lambda_{D,\q}(P)=v_{\q}^0 (f_{\sigma}(i_{\sigma}(P)))+c_{\sigma,\q}, 
\end{align} 
where $0\le c_{\sigma,\q}\le -v_{\q}(f_{\sigma})$.
We note that $v_{\q}(f_{\sigma})\le 0$ since $f_{\sigma}(0,\hdots,0)=1$.
Since there are only finitely many $X_\sigma$, we find a finite index set $I_P$  such that for any $\p\notin S$ the equality \eqref{Psigma}  holds for some $\sigma\in I_P$.  Let 
$S_{\sigma}:=\{\p\in C({\bf k}): v_{\p}(u_i)< 0, \text{ for some }   1\le i\le n\}\cup S$.
 Then,
\begin{align}\label{cond}
(u_1,\hdots,u_n)\in\mathcal O_{S_{\sigma}}^n \quad\text{and}\quad\ N_{0,S_{\sigma}}^{(1)}(u_i)\le \frac 1{\ell }N_{0,S_{\sigma}}(u_i)\le \frac 1{\ell }h(u_i),
\end{align}
 since $P$ is a  $(\Delta, S)$-integral point with multiplicity at least $\ell$. 
 Furthermore,
\begin{align}\label{countingD}
N_{D,S} (P)-N_{D,S}^{(1)}(P)\le \sum_{\sigma\in I_P}N_{0,S_{\sigma}}   (f_{\sigma}(i_{\sigma}(P)))-N_{0,S_{\sigma}} ^{(1)}  (f_{\sigma}(i_{\sigma}(P)))+O(1), 
\end{align}
by \eqref{Psigma}.
Let $M$ be the number of $X_{\sigma}$; then, $|I_P|\le M$.
Finally, we note that $|S_{\sigma}|\le  |S|+\#\{\p\notin S| \min_{1\le i\le n}\{v_{\p}(u_i)\}<0 \}$. 
Let $\mathcal B$ be the divisor on $\mathcal X$ coming from the boundary $D_0$ and $\iota_P:C\to \mathcal X$ 
be the corresponding section of $P$.  Then,
for $\p\in S_{\sigma}\setminus S$, we have $ \iota_P(\p)\in \mathcal B$,  and 
$$
-\min\{0,v_{\p}(u_1),\hdots, v_{\p}(u_n)\}=n_{B_{\alpha},\p}\ge \ell,
$$ 
where $B_{\alpha}$ is a component of $D_0$.
Therefore, we have
\begin{align}\label{sizeSs}
 |S_{\sigma}|\le |S|+\frac1{\ell} h(1,u_1,\hdots,u_n)\le  |S|+ \frac {n}{\ell} \max_{1\le i\le n}\{h(u_i)\}.
\end{align}

We may now apply Theorem \ref{main_thm} and \eqref{sizeSs} to each $f_{\sigma}$ and  $S_{\sigma}$.  Let $\epsilon>0$.  Then there exist a positive real $b_{\sigma}$ and a proper Zariski closed subset  $W_{\sigma}\subset X(K)$ containing ${\rm Supp} (D)$  and $Z_{\sigma,A}$ for each $\sigma\in \Sigma$  such that for sufficiently large integer $\ell$, either
\begin{align}\label{htbddsigma}
h({i}_{\sigma}(P))\le b_{\sigma}\left(h(f_{\sigma}) +\max\{1,\chi_S(C)\}\right)+ \frac{2b_{\sigma}}{\ell}  h({i}_{\sigma}(P))
\end{align} 
or
\begin{align}\label{gcdsigma}
N_{0,S_{\sigma}}   (f_{\sigma}(i_{\sigma}(P)))-N_{0,S_{\sigma}} ^{(1)}  (f_{\sigma}(i_{\sigma}(P))) 
\le  \frac{\epsilon}{Mc_{\sigma,A}} h({i}_{\sigma}(P)) +O(1)  
\end{align}
for all   
 $P\in  \mathcal R\cap X_{\sigma}(K)\setminus W_{\sigma}$.     
 Since the number of $X_{\sigma}$ is finite, we conclude from \eqref{htsigma}, \eqref{countingD}, \eqref{htbddsigma} and \eqref{gcdsigma} that there exist  a positive integer $\ell_0$ and a proper Zariski closed subset $Z_1$ of $X$  such that either
$h_A(P)\le O(1)$ or
\begin{align}\label{multiplicity}
N_{D,S} (P)-N_{D,S}^{(1)}(P)<\epsilon h_A(P) 
\end{align}
 for all   $ P\in  \mathcal R \setminus Z_1$ if $\ell>\ell_0$.  
 
Let $\q\in S$. Let  $ P\in  \mathcal R\setminus {\rm Supp}(D)$.  We find  some $\sigma$ (depending on $\q$) such that $P\in X_{\sigma}(K)\setminus Z_{\sigma,A}$ and  $v_{\q}( {i}_{\sigma}(P)) :=\min_{1\le i\le n}\{v_{\q}(u_i)  \}\ge 0$.  Then \eqref{Psigma} holds as well.
Let $S_{\sigma,\q}:=\{\p\in C({\bf k}): v_{\p}(u_i)< 0, \text{ for some }   1\le i\le n\}\cup S$.
Therefore, $ {i}_{\sigma}(P) =(u_1,\hdots, u_n )\in (\mathcal O_{S_{\sigma,\q}}^*)^n$.
We then  apply Theorem \ref{ProximityAffine} 
 to  each set $S_{\sigma,\q}$, for each $\q\in S$, to find a Zariski closed subset  $Z_{\sigma,\q}$ containing $Z_{\sigma,A}$ from \eqref{htsigma}  of $X_{\sigma}$ such that
\begin{align}\label{fsigma} 
 v_{\q}^{0} (f_{\sigma}(i_{\sigma}(P))\le \tilde c_1\left( \chi_{S_{\sigma,\q}}^+(C)\right)+ \tilde c_2 h(f_{\sigma}) 
\end{align} 
for all   $  P\in  \mathcal R \cap X_{\sigma}(K)\setminus Z_{\sigma,\q}$.
Using \eqref{sizeSs}, the right hand side of \eqref{fsigma} can be estimated as 
\begin{align*} 
\chi_{S_{\sigma,\q}}^+(C)\le 2\gen+ |S|+ \frac {n}{\ell} \max_{1\le i\le n}\{h(u_i)\}.
\end{align*}
 By repeating the previous arguments used in the proof of \eqref{multiplicity}, and by enlarging $\ell_0$ if necessary, we have either
$h_A(P)\le O(1)$, or 
\begin{align}\label{lambdaq} 
m_{D,S}(P ):=\sum_{\q\in S}\lambda_{D,\q}(P)= \sum_{\q\in S}v_{\q}^{0} (f_{\sigma}(i_{\sigma}(P))+O(1)\le \epsilon h_A(P)
\end{align}
for all  $ P\in  \mathcal R \setminus Z_2$, where $Z_2$ is the union of all $Z_{\sigma,\q}$  for $\q\in S$ if $\ell\ge\ell_0$.   
 We note that, according to Definition \ref{model} and its accompanying remark, the Zariski closed subsets $Z_1$ and $Z_2$ remain unchanged regardless of the set $\mathcal{R}$ of $(\Delta, S)$-integral points, while the implied constant $O(1)$ may vary.  
 
Since $h_D(P)= m_{D,S}(P )+N_{D,S}(P )$, we can derive from   \eqref {lambdaq} that
$$
N_{D,S} (P)\ge h_D(P)- \epsilon h_A(P) 
$$
 for all   $ P\in \mathcal R \setminus Z_2$.
Then we have either $h_A(P)\le O(1)$  or 
\begin{align}\label{final}
N_{D,S}^{(1)}(P)\ge h_D(P)- 2\epsilon h_A(P)-O(1), 
\end{align}
for all    $P\in \mathcal R \setminus \{Z_1\cup Z_2 \}$.
Finally, we note that the situation that $h_A(P)\le O(1)$ can be included in \eqref{final} by enlarging the implied constant.
\end{proof}
 
 \section{Proof of Theorem \ref{toricGG} and Theorem \ref{vojtaconj}}\label{Thm89}

  \begin{proof}[Proof of Theorem \ref{toricGG}]
 Since a canonical divisor ${\mathbf K}_X$ can naturally be taken  to be $-D_0$ (see \cite[Theorem 8.2.3]{cox}),  and   ${\mathbf K}_X+\Delta\le {\mathbf K}_X+D_0+A$, 
  the assumption that ${\mathbf K}_X+\Delta$ is big  
 implies that $A$ is also big.   
 
Let $\epsilon=\frac13$.  By Theorem \ref{toric},  there exist  a proper Zariski closed subset $Z$ of $X$ and  a positive integer $\ell_0$  
such that, for any set $ \mathcal R$ of $(\Delta, S)$ integral-points  with multiplicity at least $\ell_0$  along $D_0 $, we have  
\begin{align}\label{gabctoric2}
N_{A,S}^{(1)}(P)\ge (1-\epsilon )\cdot h_A(P)-O(1)
\end{align}
holds for all $P\in \mathcal R\setminus Z$ if $\ell\ge \ell_0$. 
 On the other hand, it follows from Definition \ref{campdefi} that 
 \begin{align}\label{trucation}
N_{A,S}^{(1)}(P)\le  \frac12 N_{A,S}(P)\le  \frac12 h_A(P).
\end{align}
 Combining \eqref{trucation} and \eqref{gabctoric2}, we have 
$h_A(P)\le O(1)$
for 	all	$P\in \mathcal R\setminus Z$ if $\ell\ge \ell_0$ as wanted.  
\end{proof}

\begin{proof}[Proof of Theorem \ref{vojtaconj}]
 Let $ \pi:   Y\to X$ be a finite morphism.  Let $D$ be the support of $ H$.
Following the arguments from \cite[Lemma 1]{CZ2013}, we have
	$ K_{Y}\sim  \pi^*(K_{X})+{\rm Ram}$, where ${\rm Ram}$ is the ramification divisor of $\pi$.
	Furthermore, ${\rm Ram}=R+R_D$, where $R_D$ is the contribution coming from the support contained in $D$, i.e.
	$ H =D+R_D$. 
	Since a canonical divisor ${\mathbf K}_X$ can naturally be taken  as $-D_0$, we obtain 
	\begin{equation}\label{ramifidivisor}
		R\sim  D + {\mathbf K}_{Y}\ge\Delta+{\mathbf K}_{Y}. 
	\end{equation}
	Since  $(Y,\Delta)$ is   of  general type, then ${\mathbf K}_Y+\Delta$ is big and hence $R$ is big as well.

 Let $R_0$ be an irreducible component of $R$, which we may assume to be defined over $K$ (see the proof of  \cite[Theorem 3]{GSW2022}).    Without loss of generality, we let $R_0=R$.  Otherwise, we simply repeat the following steps for each irreducible component.  Let $A=\pi(R)$, which is a normal crossings divisor on $(X,\mathbb G_m^n)$ by assumption.
Since $  \pi^*A$ has multiplicity at least 2 along $R$, we have
$$
2\lambda_{R,\q}(P)\le \lambda_{\pi^*A,\q}(P)= \lambda_{A,\q}(\pi(P))$$
for all $\q\in C$ if $P\in Y(K)\setminus R$.  Hence,
\begin{align}\label{coutingzero}
N_{R,S}(P)\le N_{A,S}(\pi(P))-N^{(1)}_{A,S}(\pi(P)),
\end{align}
 if $P\in Y(K)\setminus R$.
 
Since $(Y,\Delta)$ is a Campana orbifold of general type with ${\rm Supp}(\Delta)= {\rm Supp}(H)$ and multiplicity at least $\ell $ and  $ \mathcal R\subset Y(K)$ is a subset of $(\Delta, S)$-integral points, it is clear from Definition \ref{model} that $\pi( \mathcal R)\subset X(K)$ is a subset of $(\Delta',S)$-integral points, where $\Delta'=\pi(\Delta)$ (as $\mathbb Q$ divisors). 

Let $\epsilon=\frac13$.  We now apply Theorem \ref{toric} to the right hand side of  \eqref{coutingzero}, and use  Proposition \ref{bigdivisor} together with the functorial properties of local Weil functions to obtain a sufficiently large integer $\ell_0$ and a proper Zariski closed subset $Z$ of $Y$  such that either $ h_R(P)\le O(1)$ or  
 \begin{align}\label{coutingzero1}
N_{R,S}(P)\le \epsilon h_R(P), \quad\text{and }
\end{align}
\begin{align}\label{coutingzero2}
N^{(1)}_{A,S}(\pi(P))\ge  h_A(\pi(P))-\epsilon  h_R(P)-O(1)
\end{align}	
 for all $P\in  \mathcal R\setminus Z$ if $\ell\ge \ell_0$. 
 On the other hand, by functorial properties,  $ R\le   \pi^* (A)$ (as divisors)  implies that 
\begin{align}\label{Fproxi2}
  m_{R,S}(P)
  &=m_{A,S}(\pi(P))  +O(1)\cr
  &= h_A(\pi(P))-N_{A,S}(\pi(P)) +O(1)\cr
  &\le \epsilon  h_R(P)+ O(1),  
\end{align}
where the last inequality follows from \eqref{coutingzero2}.
Together with \eqref{coutingzero1} and from the fact that $\epsilon=\frac13$, we have $h_R(P)\le O(1)$ 
   if $P\notin  {\rm Supp} (  \pi^*(A))\cup\pi^{-1}(Z)$, as wanted.  
 \end{proof}


\begin{thebibliography}{10}


\bibitem{BM}\textsc{W.~D.~Brownawell and D.~W.~Masser,} \textit{Vanishing
sums in function fields}, Math. Proc. Cambridge Phil. Soc. \textbf{100}
(1986), no. 3, 427--434.

 
 

\bibitem{CT} \textsc{L.~Capuano and A.~Turchet,} \emph{Lang-Vojta conjecture over function fields for surfaces dominating $\mathbb G_m^2$},  Eur. J. Math.   \textbf{8} (2022),  no. 2, 573--610.

 
\bibitem{CZ2008} \textsc{P.~Corvaja and U.~Zannier,} \emph{Some
cases of Vojta's conjecture on integral points over function fields},
J. Algebraic Geom.  \textbf{17} (2008), no. 2, 295--333.

 \bibitem{CZ2013} \textsc{P.~Corvaja and U.~Zannier,} \emph{Algebraic hyperbolicity of ramified covers of $\mathbb G_m^2$ (and integral points on affine subsets of $\PP^2$), } J. Differential Geom.  \textbf{93}  (2013), no.~3, 355--377. 

 \bibitem{cox}   \textsc{D.~A.~Cox,  J.~B.~Little, and H.~K.~Schenck,}   \emph{Toric Varieties}, Graduate Studies in Mathematics, 124. American Mathematical Society, Providence, RI, 2011.
  
  
 \bibitem{Fulton} \textsc{W.~Fulton} \emph{Introduction to Toric Varieties}, Ann. of Math. Stud., 131, Princeton University Press, Princeton, NJ, 1993, xii+157 pp.
 
 \bibitem{GNSW} \textsc{J.~Guo, K.~D.~Nguyen, C.-L.~Sun and J.~T.-Y. Wang,} \emph{Vojta's abc Conjecture  for algebraic tori and applications over function fields}, Adv. Math., to appear. arXiv:2106.15881v3.

 \bibitem{GSW2022} \textsc{J.~Guo, C.-L.~Sun and J.~T.-Y. Wang,} \emph{On Pisot's $d$-th root conjecture for function fields  and related GCD estimates}, J. of Number Theory, \textbf{231} (2022), 401--432. 
 
  
\bibitem{HS} \textsc{M.~Hindry and J.~H.~Silverman}, \emph{Diophantine Geometry. An Introduction.}, Graduate Texts in Mathematics, 201. Springer-Verlag, New York, 2000.

 
\bibitem{LangDG} \textsc{S.~Lang}, \emph{Fundamentals of Diophantine Geometry}, Springer-Verlag, New York, 1983.

  
\bibitem{Levin:GCD} \textsc{A.~Levin}, \emph{Greatest common divisors and
{V}ojta's conjecture for blowups of algebraic tori}, Invent. Math.
\textbf{215} (2019), no. 2, 493--533.

\bibitem{PTV2021} \textsc{M.~Pieropan, A.~Smeets, S.~Tanimoto and A.~V\'arilly-Alvarado}, \emph{Campana points of bounded height on vector group compactifications}, Proc. Lond. Math. Soc. \textbf{123} (2021), no. 3, 57--101.

  
\bibitem{Tur}  \textsc{A.~Turchet}, \emph{Fibered threefolds and Lang-Vojta's conjecture for function fields}, Trans. Amer. Math. Soc.  \textbf{369} (2017), no. 12, 8537--8558.

\bibitem{Vojta}
\textsc{P.~Vojta,} \emph{Diophantine Approximation and Nevanlinna Theory}, {\it Arithmetic Geometry,} 111-224, Lecture Notes in Mathematics {\bf 2009}, Springer-Verlag, Berlin, 2011.

\end{thebibliography}
\end{document}